\newtheorem{theorem}{Theorem}[section]
\newtheorem{lemma}{Lemma}[section]
\newtheorem{problem}{Problem}[section]
\newtheorem{example}{Example}[section]
\newtheorem*{proof1}{Proof of Theorem 3.1}
\theoremstyle{definition}
\numberwithin{equation}{section}
\begin{document}
\title{Inverse problems for elastic wave from Partial Cauchy Data: Uniqueness and Co-inversion for Shape and Impedance Function}
\author{{Yao Sun$^{\dag}$, Yan Chang$^{\ddag,*}$, Yukun Guo$^\ddag$}\\
\small{$^*$Corresponding author: changyan@hit.edu.cn}\\
\small{$^\dag$ College of Science, Civil Aviation University of China, Tianjin, P. R. China.}\\
\small{$^\ddag$} School of Mathematics, Harbin Institute of Technology, Harbin, P. R. China}
\date{}
\maketitle{} {\bf Abstract.}
We consider an inverse problem for the elastic wave of simultaneously reconstructing the impedance and the geometric information of the bounded body that is occupied by a homogeneous and isotropic elastic medium from the measured Cauchy data. A two-stage reconstruction method is proposed to realize simultaneous reconstruction of multiple targets.  In the first step, we restore the aperture information by utilizing the observed Cauchy data that is measured on an accessible part of the boundary. In the second step, we start with the boundary condition and propose a novel iterative method to simultaneously reconstruct the missing boundary and the impedance function.
	Theoretically, we establish the uniqueness result of the co-inversion problem based on analyzing the properties of the corresponding operators. An explicit derivative is computed for the iterative method. Numerical examples are presented to test the effectiveness and efficiency of the proposed method.

{\bf Key words}: Inverse problem; Elastic wave; Uniqueness; Shape reconstruction; Impedance function

 \section{Introduction}
In the vast realm of earth sciences, we are eager to lift the veil of mystery from the earth's depths, understanding the distribution of subsurface rock layers, their structural characteristics, and potential geological hazards. In the field of engineering structural health monitoring, we urgently need to accurately ascertain whether there are damages or defects within structures, as well as their specific locations and shapes, to ensure the safe and stable operation of various infrastructure facilities. The inverse problem for the elastic wave serves as a crucial key to unlocking these unknown domains \cite{1}.

Elastic waves, a manifestation of mechanical vibration propagation through matter, exhibit complex interactions with the medium's properties and the geometric configurations inherent in the Earth's subsurface or within engineering structures as they propagate. \cite{4}. When elastic waves encounter interfaces or internal structures of different shapes, their propagation paths, wave characteristics, and energy distributions undergo significant changes.
When endeavoring to deduce the pertinent physical properties of structures through the observation of wavefield information, a captivating yet formidable class of problems emerges: inverse problems.

A representative inverse problem in this realm involves ascertaining the boundary shape of an elastic body, relying on the Cauchy data gathered from the accessible part of its boundary. These Cauchy data typically encompass the displacement and traction values measured at the accessible boundary. In practical scenarios, directly measuring the shape of the entire elastic body is generally challenging or even unfeasible, particularly when the body is buried underground or constitutes a component of a large-scale engineering structure. Nevertheless, we can generally install sensors on the accessible portion of the boundary to capture the elastic wave responses. The ultimate objective of the inverse problem is to leverage this restricted yet invaluable information to reconstruct the unknown boundary shape of the elastic body.
In the context of electrostatics, the relevant model gives rise to an inverse boundary value problem associated with the Laplace equation. Similarly, for elastic obstacles, an inverse boundary value problem emerges in connection with the Navier equation. For both scenarios, interested readers can refer to relevant literature \cite{5,6,7,8}.

Despite its importance, solving this inverse problem is far from straightforward. A reason accounting for this is that the inverse problem is an ill-posed problem, which means that small errors in the measured Cauchy data can lead to large uncertainties in the reconstructed boundary shape. Therefore, developing robust and efficient numerical methods to solve this inverse problem has been an active area of research in recent decades. Among numerous inversion algorithms, the direct sampling method based on the indicator function has received continuous attention from scholars both at home and abroad. In \cite{RTM}, the author propose a reverse time migration method to reconstruct the shape of the obstacle from the scattered field. In \cite{12,13}, factorization method has been extensively researched by Hu et al. We also refer to \cite{lxd, Liu2019} for the related study on the sampling method for inverse elastic scattering problem.
Being a significant qualitative numerical method, the sampling methods are usually straightforward since only several inner products are involved in these methods and the visualization of the indicator function can be utilized to realize reconstruction.  Nevertheless, such sampling methods are usually less accurate and it may be difficult to recognize the boundary curve accurately from the reconstruction.

In contrast, the quantitative method based on the iterative method or the optimization method realize a more accurate reconstruction. By reformulating the inverse problem as an optimization problem, the quantitative method prefers to reconstruct the boundary curve or the impedance function by solving the nonlinear and ill-posed integral equations for the unknown targets. In \cite{18}, the authors propose a nonlinear integral equation method to simultaneously reconstruct the shape and impedance in corrosion detection.
In \cite{KressIP01,KressJIE08}, Kress and Rundell consider the inverse problems of time-harmonic acoustic or electromagnetic waves. A regularized Newton iteration is given to solve the inverse problem of simultaneously determining the impedance and shape of a two-dimensional scatterer, which is based on a knowledge of the far-field pattern. The main idea is to establish the ill-posed nonlinear equation for the operator that maps the boundary and the boundary impedance of the scatterer onto the far-field pattern.
Later, Kress and Rundell \cite{Kress2005IP} determine the shape of a perfectly conducting inclusion via a pair of nonlinear and ill-posed integral equations for the unknown boundary. The integral equations can be solved by the linearization corresponding to the regularized Newton iterations. This idea is to extend by Cakoni and Kress \cite{17} to a simply connected planar domain from a pair of Cauchy data on the known boundary curve. This method is also used to complete the Cauchy data on the unknown boundary curve, and then an impedance profile is given by a point to point method. Thus the boundary curve is to recover by a known impedance profile. We also refer to \cite{era, Chang23} for the optimization method or the Newton's type iterative method to simultaneous reconstructing the obstacle and its excitation sources from the total field data.

Motivated by the idea in \cite{KressIP01} proposed by Kress and Rundell, we aim to demonstrate a Newton-type iterative method for reconstructing the elastic impedance function and the shape of the elastic body based on the integral equations. Different from the previous work \cite{KressIP01} by a direct integral approach for the mixed problem, we consider an indirect integral equation by the approach of having an integral operator defined on a virtual boundary enclosing the scatterer.

Though the singularities of the kernels corresponding to the direct integral approach would provide some more information about boundary functions. We apply the "safe'' approach of having an integral operator defined on a boundary enclosing the scatterer. It should not deal with the singularities of the kernels on the boundary especially the strongly singular integral resulting the hypersingular integral from Fr$\rm\acute{e}$chet derivative. The indirect integral approach will easily give the Fr\'{e}chet derivative of the boundary integral operator.
In addition, the salient features of the current work can be summarized as follows: First, the proposed method has the ability to simultaneously and quantitatively reconstructing both the shape and the impedance function from the Cauchy data; Second, by revealing the analytical dependence of the solution on frequency, we prove the uniqueness result for the co-inversion under consideration; Third, by reconstructing the unknown displacement from the Cauchy problem first we propose a novel Newton method to realize the simultaneous reconstruction, where there is no forward solver involved and thus the method proposed is effective and efficient. Finally, the proposed method is insensitive to the measurement noise and exhibits stable reconstruction capability.

The paper is organized as follows. In section $2$, we introduce the inverse impedance problem and the inverse shape problem. In section $3$, some theoretical results including a unique result are given by introducing the Helmholtz decomposition of the displacement. In section $4$, we specifically demonstrate linearization of the integral equations and the Newton-type iterative method for solving inverse shape problem. At last, some numerical examples are provided to demonstrate the effectiveness of the proposed linearized iterative scheme.

\section{Problem Setup}
We begin this section with a brief description of the problem under consideration. Let $D\subset\mathbb{R}^2$ be an open bounded domain occupied by a homogeneous and isotropic elastic medium with density $\rho$. Let $\Gamma_0$ indicate the known portion of the boundary $\partial D$. Denote the unknown part of the boundary $\partial D$ by $\Gamma_m=\partial D\backslash \Gamma_0$ such that $\partial D=\Gamma_0\cup\Gamma_m,$ $\Gamma_0\cap\Gamma_m=\emptyset$. We refer to Figure \ref{fig: boundary} for an intuitive illustration of the geometry setup.
\begin{figure}
	\centering
	\includegraphics[width=0.5\linewidth]{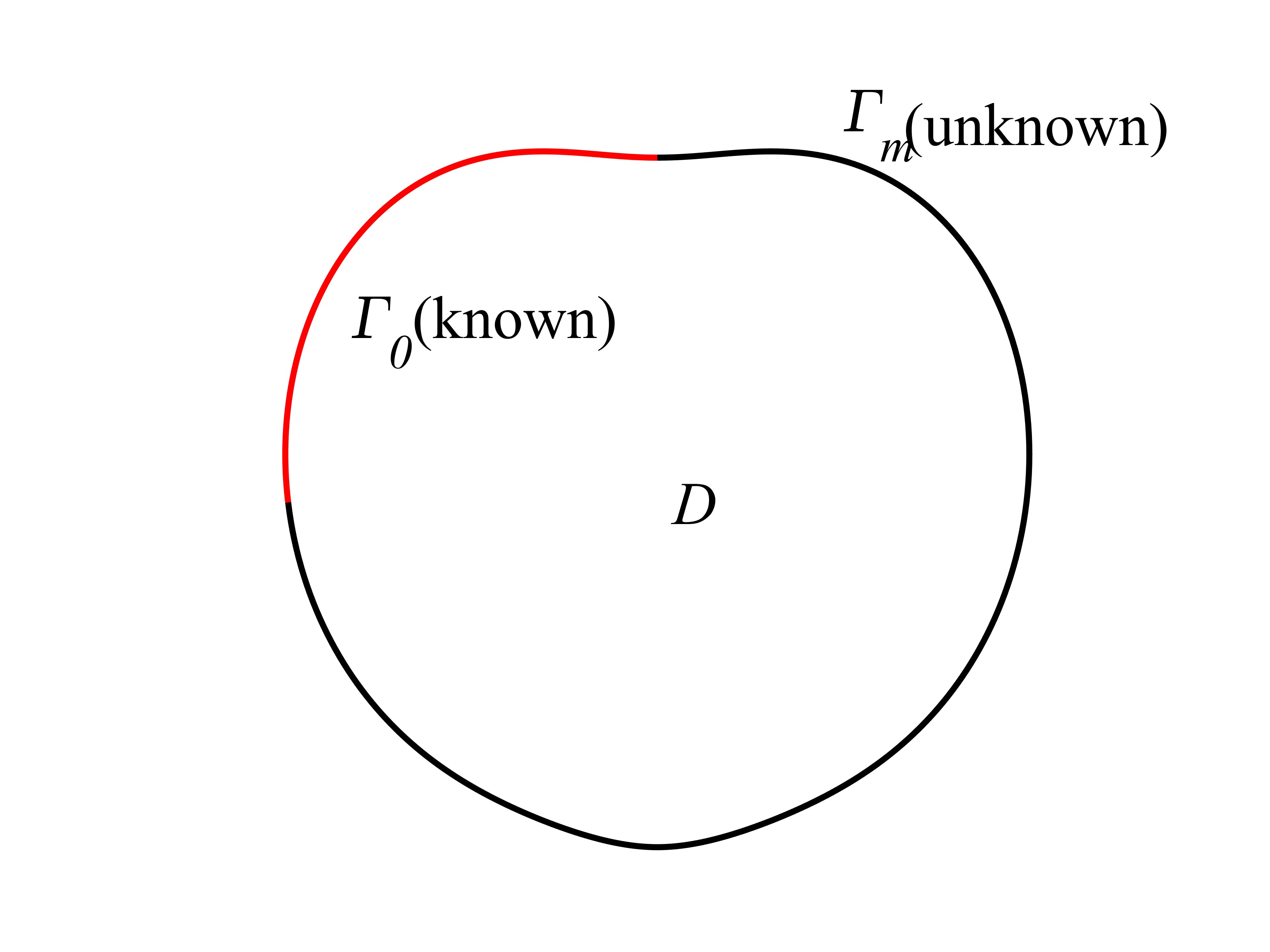}
	\caption{Geometry Setup.}
	\label{fig: boundary}
\end{figure}

Let $\omega>0$ be the angular frequency and $\lambda,\mu$ be the Lam\'{e} constants satisfying $\mu>0,\lambda+\mu>0$. Consider the following boundary value problem:
\begin{align}
	&\mu\Delta{\bm u}+(\lambda+\mu)\nabla\nabla\cdot{\bm u}+\rho\omega^2{\bm u}=0,\quad \text{in} \ D,\\
	&{\bm u}={\bm f},\quad \text{on}\ \Gamma_0,\\
	&T_{\bm n}{\bm u}+\mathrm{i}\omega\chi{\bm u}={\bm g},\quad \text{on}\ \Gamma_m,\label{eq: impedance}
\end{align}
i.e, the displacement $\bm u$ satisfies the Dirichlet boundary condition on $\Gamma_0$ and the impedance boundary condition on $\Gamma_m$ with $\chi\ge0$ being the continuous impedance function. In \eqref{eq: impedance}, $T_{\bm n}$ is the surface traction operator defined by
\begin{align}\label{eq: Tn}
	T_{\bm n}{\bm u}:=2\mu\frac{\partial\bm u}{\partial\bm n}+\lambda{\bm n}\text{div}{\bm u}-\mu{\bm n}^\bot\text{div}^\bot{\bm u},
\end{align}
where $\bm n$ is the unit outward normal vector.

The inverse problem under consideration in this paper can be formulated as:
\begin{problem}\label{ip}
	Simultaneously determine the shape of the unknown portion $\Gamma_m$ together with the impendance function $\chi$ from the Cauchy data ${\bm h}=\left({\bm f},{\bm t}\right)^\top$ that measured on $\Gamma_0$, where ${\bm f}(x)$ is the measured displacement and ${\bm t}=T_{\bm n}{\bm f}$ is the corresponding traction.
\end{problem}
\section{Uniqueness}
In this section, we focus on the uniqueness results for problem \ref{ip}.
The main result for this section is given by the following theorem:
\begin{theorem}\label{thm: unique}
	Assume that $D_1$ and $D_2$ are elastic bodies with impedance $\chi_1$ and $\chi_2,$ respectively and $\partial D_1\cap \partial D_2=\Gamma_0.$ Denote the displacements corresponding to $(D_j,\chi_j),j=1,2$ by ${\bm u}_j(x,\omega),j=1,2.$ For $\omega\in(a,b),$ if
	
	$$\bm u_1(\bm x, \omega)|_{\Gamma_0}=\bm u_2(\bm x, \omega)|_{\Gamma_0},~~ T_{\bm n}\bm u_1(\bm x, \omega)|_{\Gamma_0}= T_{\bm n}\bm u_2(\bm x, \omega)|_{\Gamma_0}$$
	Then $D_1=D_2,$ and $\chi_1=\chi_2.$
\end{theorem}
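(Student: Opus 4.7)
The plan is to combine the Holmgren unique continuation principle with the analytic dependence of $\bm u_j(\cdot,\omega)$ on $\omega$, exactly as hinted at in the introduction. As a first step, let $G$ denote the connected component of $D_1\cap D_2$ whose boundary contains $\Gamma_0$, and set $\bm w := \bm u_1-\bm u_2$ in $G$. Then $\bm w$ solves the homogeneous Navier system in $G$ with vanishing Dirichlet trace and vanishing traction on $\Gamma_0$. Because the Navier operator is elliptic with constant (in particular analytic) coefficients and $\Gamma_0$ is non-characteristic, Holmgren's uniqueness theorem yields $\bm w\equiv 0$ in $G$, so that $\bm u_1(\bm x,\omega)=\bm u_2(\bm x,\omega)$ for every $\bm x\in G$ and every $\omega\in(a,b)$.

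Assume for contradiction that $D_1\neq D_2$; after relabeling, we can find a relatively open smooth arc $\Sigma\subset\partial D_2$ with $\Sigma\subset D_1$. Since $\bm u_1$ is analytic in a neighbourhood of $\Sigma$ (as $\Sigma$ lies in the interior of $D_1$) and $\bm u_1=\bm u_2$ in $G$, taking the trace of the impedance condition satisfied by $\bm u_2$ yields the overdetermined identity
\begin{equation}
T_{\bm n}\bm u_1(\bm x,\omega)+\mathrm{i}\omega\chi_2(\bm x)\bm u_1(\bm x,\omega)=\bm g(\bm x),\quad \bm x\in\Sigma,
\end{equation}
for all $\omega\in(a,b)$. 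Now the analyticity of $\bm u_1(\cdot,\omega)$ in $\omega$ becomes decisive: considering the annular region $\Omega:=D_1\setminus\overline{D_2}$, $\bm u_1$ solves the Navier equation on $\Omega$ together with impedance-type conditions on both components of $\partial\Omega=\Gamma_m^1\cup\Sigma$ (the reversal of orientation on $\Sigma$ effectively flips the sign of the impedance coefficient to $-\chi_2$). The eigenfrequencies of the associated homogeneous problem form a discrete set in $\mathbb{C}$, and through the Helmholtz decomposition $\bm u_1=\nabla\phi+\nabla^{\perp}\psi$ with $(\Delta+k_p^2)\phi=0$ and $(\Delta+k_s^2)\psi=0$, $k_p^2,k_s^2\propto\omega^2$, the distinct P- and S-wave speeds let one separate the two scalar contributions. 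Using the discreteness of the spectrum together with analytic continuation in $\omega$, one shows that for a co-countable subset of $\omega\in(a,b)$ both scalar potentials must vanish on $\Omega$, hence $\bm u_1\equiv 0$ on $\Omega$ and, by continuity across $\Sigma$, also $\bm u_2=0$ and $T_{\bm n}\bm u_2=0$ on $\Sigma$. A second application of Holmgren's theorem inside $D_2$ forces $\bm u_2\equiv 0$ on $D_2$, contradicting the non-triviality of the measured Cauchy data on $\Gamma_0$. Therefore $D_1=D_2$.

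Having identified the domains and setting $\Gamma_m:=\partial D_1\setminus\Gamma_0=\partial D_2\setminus\Gamma_0$, subtracting the impedance conditions for $\bm u_1$ and $\bm u_2$ on $\Gamma_m$ and using $\bm u_1=\bm u_2$ in $D_1$ gives
\begin{equation}
\mathrm{i}\omega\bigl(\chi_1(\bm x)-\chi_2(\bm x)\bigr)\bm u_1(\bm x,\omega)=0,\quad \bm x\in\Gamma_m,\ \omega\in(a,b).
\end{equation}
By the analytic dependence of $\bm u_1$ on $\omega$ and unique continuation, $\bm u_1(\cdot,\omega)$ cannot vanish on any open subarc of $\Gamma_m$ for a whole interval of frequencies unless $\bm f\equiv 0$; consequently $\chi_1=\chi_2$ on a dense subset of $\Gamma_m$, and continuity of the impedance functions promotes this to $\chi_1\equiv\chi_2$, completing the proof.

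The genuine obstacle I anticipate is the middle step: rigorously extracting a contradiction from the overdetermined impedance condition on the interior arc $\Sigma$ by means of the analyticity in $\omega$ and the Helmholtz decomposition, while properly handling the discrete resonance frequencies of the mixed, sign-indefinite boundary value problem on $\Omega$ (and reducing the inhomogeneous case $\bm g\not\equiv 0$ to the homogeneous one via a reference solution). All the other steps, in contrast, follow from classical unique continuation and boundary-trace arguments once this core obstacle is cleared.
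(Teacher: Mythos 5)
Your outer scaffolding coincides with the paper's: propagation of $\bm u_1=\bm u_2$ into the overlap by uniqueness of the Cauchy problem, the observation that crossing to the arc $\Sigma\subset\partial D_2\cap D_1$ flips the sign of the impedance coefficient, and your final identification $\chi_1=\chi_2$ is essentially verbatim the paper's argument. However, the pivotal middle step — extracting $\bm u_1\equiv 0$ on the region between the two boundaries — is left as an announced program (``one shows that for a co-countable subset\ldots''), and as sketched it would not go through. You appeal to discreteness of the eigenfrequencies of the mixed impedance problem on $\Omega=D_1\setminus\overline{D_2}$, but with the sign-indefinite coefficient ($\chi_1\ge 0$ on $\Gamma_m^{(1)}$, $-\chi_2\le 0$ on $\Sigma$) this cannot be invoked off the shelf: analytic Fredholm theory needs at least one frequency at which uniqueness for the homogeneous problem holds, and the standard energy argument fails exactly here because for real $\omega$ the boundary term $\mathrm{i}\omega\int_{\partial\Omega}\chi|\bm u|^2\,\mathrm{d}s$ has indefinite sign, so taking imaginary parts yields nothing. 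Likewise, the claim that the distinct P- and S-speeds ``separate the two scalar contributions'' is unfounded: the traction/impedance conditions couple $\phi$ and $\psi$ on the boundary, so the problem does not split into two scalar impedance problems. Since you yourself flag this step as the genuine obstacle, the proposal does not contain a proof of the key contradiction.

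The paper fills precisely this hole with an energy identity plus complex-frequency continuation, avoiding spectral theory altogether. On a component $D^*$ of $D_1\setminus\overline{D_2}$ with $\partial D^*=\Lambda_1\cup\Lambda_2$, $\Lambda_1\subset\Gamma_m^{(1)}$, $\Lambda_2\subset\Gamma_m^{(2)}$, the combined condition $T_{\bm\nu}\bm u_1=-\mathrm{i}\omega\chi\bm u_1$ (with $\chi=\chi_1$ on $\Lambda_1$, $\chi=-\chi_2$ on $\Lambda_2$) and the first Betti formula give, for real $\omega\in(a,b)$ where the boundary term is purely imaginary, the real-part identity $\int_{D^*}\left(E(\bm u_1,\overline{\bm u}_1)-\rho\omega^2|\bm u_1|^2\right)\mathrm{d}\bm x=0$. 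The analytic dependence of $\bm u_1(\cdot,\omega)$ on $\omega$ (the paper's Lemma~3.2, proved through the single-layer ansatz on the virtual boundary $\partial B$) extends this identity to complex $\omega$; setting $\omega=1+\mathrm{i}\delta$, the imaginary part reads $2\rho\delta\int_{D^*}|\bm u_1|^2\,\mathrm{d}\bm x=0$, forcing $\bm u_1\equiv 0$ in $D^*$, whence analytic continuation in $\bm x$ gives $\bm u_1\equiv 0$ in $D_1$ and contradicts $\bm f\neq 0$ on $\Gamma_0$ directly — no detour through $\bm u_2$ on $D_2$ is needed. Note also that the paper's uniqueness proof takes the homogeneous impedance condition ($\bm g=0$ in its system (3.12)), so your worry about reducing the inhomogeneous case via a reference solution does not arise in its setting. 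To repair your proposal, replace the spectral/discreteness step on $\Omega$ by this Betti-formula argument with complexified frequency.
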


Before presenting the proof of Theorem \ref{thm: unique}, we revisit several pivotal findings concerning the analytical dependency of the solution on the angular frequency $\omega$.

Note that the Helmholtz decomposition for the displacement $\bm u$ reads
\begin{align}\label{eq: HD}
	{\bm u}=\nabla u_p+\nabla^\bot u_s.
\end{align}
By introducing a virtual boundary $\partial B$ such that $D\subset\subset B,$ the potential functions $u_p$ and $u_s$ can be represented by the single layer potentials as follows:
\begin{subequations}
	\begin{align}\label{eq: single_up}
		u_p(x)=\int_{\partial B}\Phi(k_p|x-y|)\varphi_1(y)\mathrm{d}s(y),\quad x\in D,\\\label{eq: single_us}
		u_s(x)=\int_{\partial B}\Phi(k_s|x-y|)\varphi_2(y)\mathrm{d}s(y),\quad x\in D,
	\end{align}
\end{subequations}
where $\varphi_1,\varphi_2$ are the density functions. This combining with \eqref{eq: HD} leads to
\begin{align*}
	{\bm u}(x)=\int_{\partial B}\mathbb{E}(x,y){\bm \varphi}(y)\mathrm{d}s(y),\quad x\in D,
\end{align*}
where ${\bm \varphi}=(\varphi_1,\varphi_2)^\top,$ and
$$
\mathbb{E}(x,y)=\begin{bmatrix}
	\partial_{x_1}\Phi(k_p|x-y|)&\partial_{x_2}\Phi(k_s|x-y|)\\
	\partial_{x_2}\Phi(k_p|x-y|)&-\partial_{x_1}\Phi(k_s|x-y|)
\end{bmatrix}.
$$
Under the representations \eqref{eq: HD}, \eqref{eq: single_up}, and \eqref{eq: single_us}, the corresponding traction $T_{\bm n}{\bm u}$ defined by \eqref{eq: Tn} can be further rewriten by
\begin{align*}
	\left(T_{\bm n}{\bm u}\right)(x)=\int_{\partial B}\mathbb{T}(x,y){\bm \varphi}(y)\mathrm{d}s(y),\quad x\in\partial D,
\end{align*}
where
\begin{align*}
	\mathbb{T}=\begin{bmatrix}
		\mathbb{T}_{11}&\mathbb{T}_{12}\\\mathbb{T}_{21}&\mathbb{T}_{22}
	\end{bmatrix}
\end{align*}
and
\begin{align*}
	\mathbb{T}_{1\ell}(x,y)&=\left[(\lambda+2\mu)\frac{\partial\mathbb{E}_{1\ell}(x,y)}{\partial x_1}+\lambda\frac{\partial\mathbb{E}_{2\ell}(x,y)}{\partial x_2}\right]n_1+
	\mu\left(\frac{\partial\mathbb{E}_{1\ell}(x,y)}{\partial{x_2}}+\frac{\partial\mathbb{E}_{2\ell}(x,y)}{\partial{x_1}}\right)n_2,\\
	\mathbb{T}_{2\ell}(x,y)&=\mu\left(\frac{\mathbb{E}_{1\ell}(x,y)}{\partial{x_2}}+\frac{\mathbb{E}_{2\ell}(x,y)}{\partial{x_1}}\right)n_1+\left[\lambda\frac{\partial\mathbb{E}_{1\ell}(x,y)}{\partial{x_1}}+(\lambda+2\mu)\frac{\mathbb{E}_{2\ell}(x,y)}{\partial{x_2}}\right]n_2
\end{align*}
with $\ell=1,2.$

Denote by
\begin{align*}
	&\mathcal{S}{\bm\varphi}(x):=\int_{\partial B}\mathbb{E}(x,y){\bm\varphi}(y)\mathrm{d}s(y),\quad x\in\mathbb{R}^2\backslash\partial B,\\
	&\mathbb{K}{\bm\varphi}(x):=\int_{\partial B}\mathbb{T}(x,y){\bm\varphi}(y)\mathrm{d}s(y),\quad x\in\partial D,\\
	&\mathbb{S}{\bm\varphi}(x):=\int_{\partial B}\mathbb{E}(x,y){\bm\varphi}(y)\mathrm{d}s(y),\quad x\in\partial D.
\end{align*}
Drawing on an analytical approach akin to that employed in the proof presented in \cite{Sun24}, we deduce the subsequent properties:
\begin{lemma}
	The operator $\mathbb{S}$ is compact, injective, and has dense range if $\rho\omega^2$ is not a Dirichlet eigenvalue of the negative Lam\'{e} operator in the interior of $D.$
\end{lemma}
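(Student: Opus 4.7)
My plan is to verify the three asserted properties of $\mathbb{S}$ separately. Compactness is immediate from the geometry: since $D\subset\subset B$, the curves $\partial D$ and $\partial B$ are disjoint with positive distance, so the kernel $\mathbb{E}(x,y)$ is real-analytic on $\partial D\times\partial B$. An integral operator between $L^2$ spaces on compact sets with continuous kernel is Hilbert--Schmidt, and hence compact.

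For injectivity I would argue via the scalar Helmholtz decomposition of the potential. Suppose $\mathbb{S}\bm\varphi=0$ and extend the density to $\bm u:=\mathcal{S}\bm\varphi$ on $\mathbb{R}^2\setminus\partial B$. By construction $\bm u=\nabla u_p+\nabla^\bot u_s$ with $u_p,u_s$ scalar Helmholtz single-layer potentials on $\partial B$ of wavenumbers $k_p,k_s$, and a direct substitution confirms that $\bm u$ solves the Navier equation on either side of $\partial B$. Since $\bm u|_{\partial D}=0$ and $\rho\omega^2$ is by hypothesis not a Dirichlet eigenvalue of the negative Lam\'{e} operator in $D$, we obtain $\bm u\equiv 0$ in $D$. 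Taking $\nabla\cdot$ and $\nabla^\bot\cdot$ of the Helmholtz decomposition and using $\Delta u_p+k_p^2 u_p=0$ and $\Delta u_s+k_s^2 u_s=0$ forces $u_p\equiv u_s\equiv 0$ in $D$; real-analyticity of the two scalar single-layers propagates this to all of $B$. Continuity across $\partial B$ then gives vanishing traces from the exterior, and uniqueness of the exterior Dirichlet problem for the Helmholtz equation (with the Sommerfeld condition built into the single-layer ansatz) yields $u_p\equiv u_s\equiv 0$ in $\mathbb{R}^2\setminus\bar B$. The jump relation for the normal derivative of a scalar single-layer then produces $\varphi_1=\varphi_2=0$.

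For dense range I would invoke the standard Hilbert-space duality: a bounded operator has dense range iff its adjoint is injective. The adjoint $\mathbb{S}^*$ has smooth kernel $\mathbb{E}(x,y)^\top$ on the same pair of disjoint curves. Supposing $\mathbb{S}^*\bm g=0$ on $\partial B$, set $\bm w(y):=\int_{\partial D}\mathbb{E}(x,y)^\top\bm g(x)\,\mathrm{d}s(x)$ on $\mathbb{R}^2\setminus\partial D$. Rewriting each $\partial_{x_j}\Phi(k|x-y|)$ as $-\partial_{y_j}\Phi(k|x-y|)$ exhibits $\bm w$ as a combination of $\nabla_y$ and $\nabla_y^\bot$ of Helmholtz single-layers on $\partial D$, so $\bm w$ solves the Navier equation in $D$ and in $\mathbb{R}^2\setminus\bar D$ and meets the Kupradze radiation condition. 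Exterior uniqueness on $\mathbb{R}^2\setminus\bar B$, followed by unique continuation across $B\setminus\bar D$, forces $\bm w\equiv 0$ on $\mathbb{R}^2\setminus\bar D$; the non-eigenvalue hypothesis then gives $\bm w\equiv 0$ in $D$, and the jump relations across $\partial D$ finally yield $\bm g=0$.

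The main obstacle is the dense-range step: the transposed kernel $\mathbb{E}(x,y)^\top$ mixes the $k_p$ and $k_s$ rows and spoils the clean matrix form exploited in injectivity, so one must repackage the two scalar components of $\bm w$ as separate Helmholtz potentials of the appropriate wavenumbers before invoking the standard jump identities---essentially the bookkeeping carried out in the companion argument of \cite{Sun24} that the lemma defers to.
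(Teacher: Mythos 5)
Your compactness argument and your injectivity argument are correct and essentially coincide with the paper's template (the paper proves the analogous properties for the stacked operator $\mathcal{N}$ and defers this lemma to the same technique): in particular you place the spectral hypothesis exactly where it belongs, using ``$\rho\omega^2$ not a Dirichlet eigenvalue of the negative Lam\'{e} operator'' to conclude $\mathcal{S}\bm\varphi\equiv 0$ in $D$ from the vanishing Dirichlet trace, where the paper's $\mathcal{N}$-proof instead uses uniqueness for the Cauchy problem.

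The dense-range step, however, contains a genuine gap. Your claim that $\bm w$ solves the Navier equation is false. Writing out $\mathbb{E}^\top$, each \emph{row} carries a single wavenumber, so $w_1(\bm y)=-\nabla_{\bm y}\cdot\int_{\partial D}\Phi(k_p|\bm x-\bm y|)\overline{\bm g}(\bm x)\,\mathrm{d}s(\bm x)$ solves the scalar Helmholtz equation with $k_p$, while $w_2$ solves it with $k_s$; the pair $(w_1,w_2)^\top$ is not an elastic field (e.g. $\mu\Delta w_1+\rho\omega^2 w_1=(\lambda+\mu)k_p^2\,w_1\neq 0$, besides the coupling term $(\lambda+\mu)\partial_1(\nabla\cdot\bm w)$). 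The exterior-vanishing step is easily repaired componentwise via scalar Sommerfeld radiation and exterior Dirichlet uniqueness, as the paper does, but the end of your chain does not survive: the \emph{interior} traces of $w_1,w_2$ are not zero, because $\mathrm{div}$ and $\mathrm{curl}$ of scalar single layers jump across $\partial D$, so the Lam\'{e} non-eigenvalue hypothesis cannot be applied to $\bm w$ in $D$; and your closing suggestion to keep the two components as separate scalar Helmholtz potentials points in the wrong direction, since a purely scalar argument would require $k_p^2$ and $k_s^2$ not to be Dirichlet eigenvalues of $-\Delta$ in $D$, which the lemma does not assume. (Also a minor misstatement: the transpose does not ``mix'' the $k_p$ and $k_s$ rows --- it separates the wavenumbers by row, which is precisely what makes the componentwise scalar step possible.)

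The missing idea, which is the crux of the paper's dense-range proof for $\mathcal{N}$, is to \emph{recombine} the two scalar fields into a single elastic potential. After obtaining $w_1=w_2=0$ in $\mathbb{R}^2\setminus\overline{D}$, set $\widetilde{\bm w}=\nabla w_1+{\bf curl}\, w_2$. Using $\nabla\nabla\cdot-{\bf curl}\,\mathrm{curl}=\Delta$ together with the structure of the Navier fundamental solution, one finds the identity $\widetilde{\bm w}(\bm y)=\rho\omega^2\int_{\partial D}\Gamma(\bm x,\bm y)\overline{\bm g}(\bm x)\,\mathrm{d}s(\bm x)$, i.e. $\widetilde{\bm w}$ \emph{is} an elastic single-layer potential with density $\rho\omega^2\overline{\bm g}$. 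This field genuinely solves the Navier equation on both sides of $\partial D$, and its Dirichlet trace is continuous across $\partial D$, so the exterior vanishing gives $\widetilde{\bm w}_{-}=\widetilde{\bm w}_{+}=0$ on $\partial D$; \emph{now} the non-eigenvalue hypothesis yields $\widetilde{\bm w}\equiv 0$ in $D$, and the Kupradze traction jump relation $T_{\bm n}\widetilde{\bm w}_{-}-T_{\bm n}\widetilde{\bm w}_{+}=\rho\omega^2\overline{\bm g}$ finally forces $\bm g=0$. Without this recombination the jump bookkeeping you invoke does not close.
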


Define the operator $\mathcal{N}$ on $\Gamma_0$ by
\begin{align}\label{eq: N}
	\mathcal{N}{\bm\varphi}(x):=\begin{bmatrix}
		\mathcal{S}{\bm\varphi}(x)\\
		\mathbb{K}{\bm\varphi}(x)
	\end{bmatrix},\quad x\in\Gamma_0,
\end{align}
then we establish some properties of $\mathcal{N}$ as follows:

\begin{theorem}
	The operator
	$\mathcal{N}: {\textbf{L}}^{2}(\partial B)\rightarrow {
		\textbf{H}}^{1/2}(\Gamma_0)\times {\textbf{H}}^{-1/2}(\Gamma_0)$ is
	compact, injective, and has a dense range.
\end{theorem}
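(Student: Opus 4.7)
The plan is to verify the three properties in turn: compactness, injectivity, then dense range, leveraging the strict separation $\mathrm{dist}(\Gamma_0,\partial B)>0$ coming from $\Gamma_0\subset\partial D\subset\overline{D}\subset\subset B$, together with the preceding lemma on $\mathbb{S}$.

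For compactness, the observation is that since $\Gamma_0$ and $\partial B$ are disjoint, the kernels $\mathbb{E}(x,y)$ and $\mathbb{T}(x,y)$ appearing in $\mathcal{S}$ and $\mathbb{K}$ are real-analytic on the product $\Gamma_0\times\partial B$. The resulting integral operators are therefore smoothing, mapping $\textbf{L}^{2}(\partial B)$ continuously into $C^\infty(\Gamma_0)$, and hence compactly into $\textbf{H}^{1/2}(\Gamma_0)$ and $\textbf{H}^{-1/2}(\Gamma_0)$ respectively. Stacking these two compact components gives the compactness of $\mathcal{N}$ immediately.

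For injectivity, I would suppose $\mathcal{N}\bm\varphi=\bm 0$ and set $\bm u:=\mathcal{S}\bm\varphi$. Because the density lives on $\partial B$ and $\overline{D}\subset\subset B$, the function $\bm u$ is real-analytic in $B$ and satisfies the Navier equation there. The hypothesis $\mathcal{N}\bm\varphi=\bm 0$ is exactly the vanishing of the Cauchy data $(\bm u,T_{\bm n}\bm u)$ on the non-characteristic arc $\Gamma_0$. Holmgren's uniqueness theorem then forces $\bm u\equiv \bm 0$ in a neighborhood of $\Gamma_0$ inside $B$, and real-analyticity/unique continuation extends this to $\bm u\equiv \bm 0$ on all of $B$. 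Taking the interior Dirichlet trace on $\partial D$ yields $\mathbb{S}\bm\varphi=\bm 0$, and the preceding lemma on $\mathbb{S}$ gives $\bm\varphi=\bm 0$.

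For dense range I would argue by duality: it suffices to check that the only $(\bm\psi_1,\bm\psi_2)\in\textbf{H}^{-1/2}(\Gamma_0)\times\textbf{H}^{1/2}(\Gamma_0)$ annihilating $\mathrm{range}(\mathcal{N})$ is the zero pair. Exchanging the order of integration rewrites the annihilation condition as the vanishing on $\partial B$ of a combined potential
\begin{align*}
\bm w(y):=\int_{\Gamma_0}\!\bigl[\,\mathbb{E}(x,y)^{\!\top}\bm\psi_1(x)+\mathbb{T}(x,y)^{\!\top}\bm\psi_2(x)\,\bigr]\,\mathrm{d}s(x),\quad y\in\mathbb{R}^2\setminus\overline{\Gamma_0},
\end{align*}
which is a linear combination of single- and double-layer-type elastic potentials with densities supported on $\Gamma_0$. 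The function $\bm w$ solves the Navier equation in $\mathbb{R}^2\setminus\overline{\Gamma_0}$ and satisfies the Kupradze radiation condition at infinity. Combining $\bm w|_{\partial B}=\bm 0$ with uniqueness for the exterior Dirichlet problem yields $\bm w\equiv\bm 0$ in $\mathbb{R}^2\setminus\overline{B}$. Since $\partial B$ is disjoint from $\overline{\Gamma_0}$, $\bm w$ is real-analytic across $\partial B$, so unique continuation propagates the vanishing into the bounded component of $\mathbb{R}^2\setminus\overline{\Gamma_0}$ as well. Finally, applying the standard jump relations of the elastic single- and double-layer potentials across $\Gamma_0$ to the identically zero $\bm w$ forces $\bm\psi_1=\bm\psi_2=\bm 0$.

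The main obstacle is the dense-range step. It requires a clean identification of the $\mathcal{N}^{*}$-action as a combined layer potential on $\Gamma_0$, together with the correct jump relations across $\Gamma_0$; these are slightly non-standard here because $\mathbb{E}$ and $\mathbb{T}$ are constructed via the Helmholtz decomposition from gradients of scalar Helmholtz fundamental solutions rather than from the Kupradze tensor, so the reciprocity and jump identities must be verified by direct computation. By contrast, compactness is essentially immediate from the analyticity of the kernel off the diagonal, and injectivity is a short Holmgren-plus-previous-lemma argument.
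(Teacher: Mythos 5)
Your compactness argument is fine and matches the paper's (the kernel is smooth since $\mathrm{dist}(\Gamma_0,\partial B)>0$). The genuine gap is in your dense-range step: the vector field $\bm w$ you build is \emph{not} a solution of the Navier equation, and it does not satisfy the Kupradze radiation condition. As a function of $\bm y$, the first component of $\mathbb{E}^{\top}\bm\psi_1+\mathbb{T}^{\top}\bm\psi_2$ is assembled from $\Phi(\kappa_p|\bm x-\bm y|)$ and satisfies the scalar Helmholtz equation with wavenumber $\kappa_p$, while the second component satisfies the one with wavenumber $\kappa_s$; the pair does not solve the coupled Lam\'{e} system, so elastic exterior Dirichlet uniqueness and the elastic jump relations across $\Gamma_0$ simply do not apply to $\bm w$ itself --- indeed the densities $\bm\psi_1,\bm\psi_2$ never appear in any standard jump formula for $\bm w$. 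The paper's proof navigates exactly this obstruction in two moves you are missing: first it kills each scalar component separately ($W_j=0$ on $\partial B$, scalar Helmholtz plus Sommerfeld radiation outside $B$, hence $W_j=0$ there, then scalar unique continuation gives $W_j=0$ in $\mathbb{R}^2\setminus\Gamma_0$); only then does it recombine into the elastic field $\widetilde{\bm W}=\nabla W_1+\mathbf{curl}\,W_2$, which by the identity
\begin{align*}
\widetilde{\bm W}(\bm y)=\rho\omega^2\int_{\Gamma_0}\Gamma(\bm x,\bm y)\overline{\bm\xi_1}(\bm x)\,\mathrm{d}s(\bm x)+\rho\omega^2\int_{\Gamma_0}T_{\bm n_x}\Gamma(\bm x,\bm y)\overline{\bm\xi_2}(\bm x)\,\mathrm{d}s(\bm x)
\end{align*}
\emph{is} a genuine combined single-/double-layer potential for the Kupradze tensor $\Gamma(\bm x,\bm y)$; extending the densities by zero to $\partial D$ and applying the elastic jump relations to the identically vanishing $\widetilde{\bm W}$ then forces $\bm\xi_1=\bm\xi_2=0$. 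You flagged the identification as the ``main obstacle,'' but a direct computation cannot validate your plan as written, because its structural premise (that $\bm w$ is a radiating Navier solution) is false; the componentwise-Helmholtz-then-recombine device is the missing idea, not a technical verification.

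There is also a smaller but real defect in your injectivity step: routing through the lemma on $\mathbb{S}$ imports the hypothesis that $\rho\omega^2$ is not a Dirichlet eigenvalue of the negative Lam\'{e} operator in $D$, which the theorem does not assume and which the paper's proof deliberately avoids. Moreover it is unnecessary for you: having already concluded $\mathcal{S}\bm\varphi\equiv 0$ in all of $B$, you should bypass $\partial D$ entirely, as the paper does. Write $\mathcal{S}\bm\varphi=\nabla\phi+\nabla^{\bot}\psi$ with $\phi,\psi$ scalar single-layer potentials on $\partial B$ at wavenumbers $\kappa_p,\kappa_s$; pairing $\nabla\phi=-\nabla^{\bot}\psi$ with $\nabla$ and $\nabla^{\bot}$ gives $\Delta\phi=\Delta\psi=0$ in $B$, which combined with the Helmholtz equations yields $\phi=\psi=0$ in $B$; then the interior normal derivatives vanish on $\partial B$, and the scalar single-layer jump relations together with uniqueness for the exterior Dirichlet problem give $\varphi_1=\varphi_2=0$ unconditionally. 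Note also that vanishing of the vector field $\mathcal{S}\bm\varphi$ does not by itself imply vanishing of the two scalar potentials with distinct wavenumbers --- the div/curl argument above is needed --- so even your reduction to $\mathbb{S}$ quietly relies on machinery your write-up does not supply.
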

\begin{proof}
	Since $\mathcal{N}: {\rm \textbf{\emph{L}}}^{2}(\partial B)\rightarrow {
		\textbf{\emph{H}}}^{1/2}(\Gamma_0)\times {\textbf{\emph{H}}}^{-1/2}(\Gamma_0)$ is an integral operator with continuous kernel, it is compact.
	
	We now prove the injectivity of $\mathcal{N}$. To show this, we commence by setting  $\mathcal{N}\bm \varphi=0$. Drawing upon the uniqueness property of the Cauchy problem, we can deduce that $\mathcal{S}\bm \varphi=0$ within the domain $D$. Further, the principle of unique continuation leads to the fact that $\mathcal{S}\bm \varphi=0$ in $B$.
	
	In fact
	\begin{align*}
		\mathcal{S}{\bm \varphi}({\bm{x}})=\nabla_{\bm x}\int_{\partial B}\Phi(\kappa_{p}| \bm{x}- \bm{y}|)\varphi_1(\bm y){\rm ds}(\bm y)+{\nabla_{\bm x}^\bot}\int_{\partial B}\Phi(\kappa_{s}| \bm{x}- \bm{y}|)\varphi_2(\bm y){\rm ds}(\bm y).
	\end{align*}
	Denote by
	$$\phi(\bm x)=\int_{\partial B}\Phi(\kappa_{p}| \bm{x}- \bm{y}|)\varphi_1(\bm y){\rm ds}(\bm y),~~ \bm{x}\in \mathbb{R}^{2}\setminus{\partial B},$$
	and
	$$\psi(\bm x)=\int_{\partial B}\Phi(\kappa_{s}| \bm{x}- \bm{y}|)\varphi_2(\bm y){\rm ds}(\bm y),~~ \bm{x}\in \mathbb{R}^{2}\setminus{\partial B},$$
	then
	\begin{eqnarray}\label{eq: nabla}
		\nabla\phi=-\nabla^\bot\psi,~~{\rm in}~~B.
	\end{eqnarray}
	Taking the inner product of \eqref{eq: nabla} with $\nabla$ and $\nabla^{\bot}$, respectively, gives
	\begin{align*}
		\triangle\phi=0,~~~\triangle\psi=0,~~{\rm in}~~B.
	\end{align*}
	Noting that $\phi$ and $\psi$ satisfy the Helmholtz equations, thus
	\begin{align*}
		\phi=\psi=0,~~{\rm in}~~B,
	\end{align*}
	this further leads to
	\begin{eqnarray}\label{eq: 3_9}
		\frac{\partial \phi_{-}}{\partial \nu}=\frac{\partial \psi_{-}}{\partial \nu}=0,~~{\rm on}~~\partial{B},
	\end{eqnarray}
	where $\nu$ denotes the unit outward normal vector defined on $\partial B$ and `$-$' denotes
	the limits as $\bm x\rightarrow \partial B$ from inside of $B$.
	
	From the jump relation of the single-layer potential functions (see e.g.\cite{Colton2013}), we can get
	\begin{eqnarray}\label{jump}
		\phi_{+}=\phi_{-}~~{\rm and}~~\frac{\partial \phi_{-}}{\partial \nu}-\frac{\partial \phi_{+}}{\partial \nu}=\varphi_1,~~{\rm on}~~{\partial{B}},
	\end{eqnarray}
	where $+$ denotes the limits as $\bm x\rightarrow \partial B$ from outside of $\partial B$.
	
	Combining \eqref{eq: 3_9} and (\ref{jump}) gives $\phi_{+}=0$ and $-\frac{\partial \phi_{+}}{\partial \nu}=\varphi_1$ on $\partial{B}$. This means that $\phi$ satisfies the homogeneous exterior Dirichlet boundary value
	problem with the Sommerfeld radiation condition. The uniqueness for the exterior Dirichlet problem yields $\phi = 0$ outside of $B$, and thus $\frac{\partial \phi_{+}}{\partial \nu}=0$. Combining with equation (\ref{jump}), we have $\varphi_1=0$. The same augment will give $\varphi_2=0$. Therefore
	the operator $\mathcal{N}$ is injective.
	
	Next we prove that $\mathcal{N}$ has dense range. Following the idea in \cite{17},
	we consider the adjoint
	operator $\mathcal{N}^*: {
		\emph{\textbf{H}}}^{1/2}(\Gamma_0)\times \emph{\textbf{H}}^{-1/2}(\Gamma_0)\rightarrow \emph{\textbf{L}}^{2}(\partial B)$ defined by
	\begin{align*}
		\langle\mathcal{N}\bm\varphi, [\bm \xi_1, \bm \xi_2]\rangle=\langle\bm\varphi, \mathcal{N}^*[\bm \xi_1, \bm \xi_2]\rangle.
	\end{align*}
	If we can prove that $\mathcal{N}^*$ is injective, then the conclusion that $\mathcal{N}$ has dense range follows.
	
	From the definition \eqref{eq: N}, we know that
	\begin{eqnarray*}
		\mathcal{N}^*(\bm \xi_1, \bm \xi_2)=\int_{\Gamma_0}\mathbb{E}^\top( \bm{x}, \bm{y})\overline{\bm \xi_1}(\bm x){\rm ds}(\bm x)+\int_{\Gamma_0}\mathbb{T}^\top( \bm{x}, \bm{y})\overline{\bm \xi_2}(\bm x){\rm ds}(\bm x), ~~ \bm{y}\in \partial B.
	\end{eqnarray*}
	Let $\mathcal{N}^*(\bm \xi_1, \bm \xi_2)=0$, and denote by
	\begin{align*}
		\bm w'(\bm y)=\int_{\Gamma_0}\mathbb{E}^\top( \bm{x}, \bm{y})\overline{\bm \xi_1}(\bm x){\rm ds}(\bm x), ~~ \bm{y}\in \mathbb{R}^{2}\setminus {\Gamma_0},\\
		\bm w''(\bm y)=\int_{\Gamma_0}\mathbb{T}^\top( \bm{x}, \bm{y})\overline{\bm \xi_2}(\bm x){\rm ds}(\bm x), ~~ \bm{y}\in \mathbb{R}^{2}\setminus {\Gamma_0}.
	\end{align*}	
	Define
	${\bm W}={\bm w'}+{\bm w''}=:\begin{bmatrix}W_1(\bm y)\\W_2(\bm y)
	\end{bmatrix}.$
	Then the component $W_1(\bm x)$ satisfies
	\begin{eqnarray*}
		\begin{cases}
			\triangle{W_1}+\kappa_{p}^2{W_1}=0,  ~~ {\rm in}~~\mathbb{R}^{2}\setminus{\overline{B}},\\
			W_1( \bm{y})=0,~~~{\rm on}\ \partial B.\\
		\end{cases}
	\end{eqnarray*}
	and the Sommerfeld radiation condition. The uniqueness for the exterior Dirichlet problem yields $W_1(\bm y)= 0$ outside of $B$. By the unique continuation argument, we derive $W_1(\bm y)= 0$ in $\mathbb{R}^{2}\setminus{\Gamma_0}$. Similarly, we can also prove that $W_2(\bm y)= 0$ in $\mathbb{R}^{2}\setminus{\Gamma_0}$.
	
	Denote
	\begin{eqnarray*}
		\widetilde{\bm W}(\bm y)=\nabla W_1(\bm y)+{\bf curl} W_2(\bm y),~~\bm{y}\in \mathbb{R}^{2}\setminus{\Gamma_0},
	\end{eqnarray*}
	then it holds that
	\begin{eqnarray}\label{Th2.e7}
		\widetilde{\bm W}(\bm y)_{+}=\widetilde{\bm W}(\bm y)_{-}=0~~{\rm and}~~T_{\bm n}\widetilde{\bm W}(\bm y)_{+}=T_{\bm n}\widetilde{\bm W}(\bm y)_{-}=0,~~{\rm on}~~{\Gamma_0}.
	\end{eqnarray}
	On the other hand, we have
	\begin{align}\label{Th2.e8}
		\widetilde{\bm W}(\bm y)=\rho\omega^2\int_{\Gamma_0}\Gamma(\bm x, \bm y){\overline{\bm \xi_1}}(\bm x){\rm ds}(\bm x)+\rho\omega^2\int_{\Gamma_0}T_{{\bm n}_x}\Gamma(\bm x, \bm y){\overline{\bm \xi_2}}(\bm x){\rm ds}(\bm x),
	\end{align}
	where $\Gamma(\bm x,\bm y)$ is the fundamental solution to the Navier equation (see e.g.\cite{McLean,12}).
	
	Let $\bm \alpha_1$ and $\bm \alpha_2$ be the extensions of $\overline{\bm \xi_1}$ and  $\overline{\bm \xi_2}$
	by zero to the whole boundary $\partial D$. Then we can rewrite \eqref{Th2.e8} as
	\begin{eqnarray}\label{Th2.e9}
		\widetilde{\bm W}(\bm y)=\rho\omega^2\int_{\partial D}\Gamma(\bm x, \bm y){{\bm \alpha_1}}(\bm x){\rm ds}(\bm x)+\rho\omega^2\int_{\partial D}T_{{\bm n}_x}\Gamma(\bm x, \bm y){{\bm \alpha_2}}(\bm x){\rm ds}(\bm x).\nonumber
	\end{eqnarray}
	From the jump relations of single-layer and double-layer potential functions (see e.g. \cite{Kupradze79, McLean}), we derive
	\begin{eqnarray}\label{Th2.e10}
		\widetilde{\bm W}(\bm y)_{+}&-\widetilde{\bm W}(\bm y)_{-}=\rho\omega^2{\bm \alpha_2},~~{\rm on}~~{\partial{D}},\\\label{Th2.e11}
		T_n \widetilde{\bm W}(\bm y)_{-}&-T_n \widetilde{\bm W}(\bm y)_{+}=\rho\omega^2{\bm \alpha_1},~~{\rm on}~~{\partial{D}}.
	\end{eqnarray}
	Combining (\ref{Th2.e7}), (\ref{Th2.e10}) and (\ref{Th2.e11}) gives
	$$\bm \alpha_1=\bm \alpha_2=0.$$ Therefore,
	$${\bm \xi_1}={\bm \xi_2}=0,$$
	which illustrates that  $\mathcal{N}^*$ is injective, and thus $\mathcal{N}$ has dense range.
\end{proof}

Before proving the uniqueness theorem, we should first give a property of the solution $\bm u(\bm x, \omega).$

\begin{lemma}
	The displacement $\bm u(\bm x, \omega)$ depends analytically on $\omega\in\mathbb{C}$, if $\bm h(\bm x, \omega)$ depends analytically on $\omega\in\mathbb{C}$.
\end{lemma}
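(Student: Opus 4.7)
The plan is to propagate the $\omega$-analyticity of the Cauchy datum $\bm h$ through the single-layer ansatz introduced above, by separately establishing operator-norm holomorphy of the integral operator in $\omega$ and holomorphy of the resulting density via a Fredholm solvability argument. Concretely, I would represent
\begin{equation*}
\bm u(\bm x, \omega) = \int_{\partial B} \mathbb{E}(\bm x, \bm y; \omega)\,\bm\varphi(\bm y, \omega)\, \mathrm{d}s(\bm y), \quad \bm x \in D,
\end{equation*}
on the virtual boundary $\partial B$ (with $D \subset\subset B$), so that matching the Cauchy datum on $\Gamma_0$ becomes the operator equation $\mathcal{N}(\omega)\bm\varphi(\cdot, \omega) = \bm h(\cdot, \omega)$ in the sense of the preceding theorem.

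The first ingredient is the joint analyticity of $\omega \mapsto \mathcal{N}(\omega)$, which rests on three elementary facts: the compressional and shear wave numbers $\kappa_p(\omega) = \omega\sqrt{\rho/(\lambda+2\mu)}$ and $\kappa_s(\omega) = \omega\sqrt{\rho/\mu}$ are entire in $\omega$; the scalar fundamental solution $\Phi(\kappa r) = \tfrac{\mathrm{i}}{4}H_0^{(1)}(\kappa r)$ is holomorphic in $\kappa$ for any $r$ bounded away from zero; and $\mathrm{dist}(\Gamma_0, \partial B) > 0$ because $D$ is compactly contained in $B$. Differentiating under the integral sign and controlling remainders by a uniform dominant on compact subsets of the $\omega$-plane then yields operator-norm holomorphy of $\mathcal{N}(\omega)$, and similarly of the representation operator $\mathcal{S}(\omega)$, between the Sobolev spaces in question.

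The crucial step is promoting this to holomorphy of the density $\bm\varphi(\cdot, \omega)$. Since the previous theorem gives injectivity and dense range of $\mathcal{N}(\omega)$ off the discrete set of interior Dirichlet eigenvalues, and since existence of a solution $\bm u(\cdot, \omega)$ of the forward BVP places $\bm h(\cdot, \omega)$ in the range of $\mathcal{N}(\omega)$, I would invoke Steinberg's analytic Fredholm theorem. A direct application is delicate because $\mathcal{N}(\omega)$ is compact and therefore not boundedly invertible; I would circumvent this by recasting the solvability condition into a standard second-kind form $(I + \mathcal{K}(\omega))\bm\varphi = \tilde{\bm h}(\omega)$ with $\mathcal{K}(\omega)$ compact and analytic in $\omega$ (for instance, by coupling the single-layer ansatz with the impedance condition on $\Gamma_m$ via layer potentials on $\partial D$, or by restricting to the range of $\mathcal{N}$ and inverting on the resulting closed subspace). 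Analytic Fredholm theory then yields a meromorphic resolvent whose poles are isolated in $\omega$ and, crucially, are removable precisely at those $\omega$ where the forward BVP is uniquely solvable, giving local $\omega$-holomorphy of $\bm\varphi$.

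Composing, $\bm u(\bm x, \omega) = \mathcal{S}(\omega)\bm\varphi(\cdot, \omega)$ is the action of an analytic operator on an analytic density and so depends analytically on $\omega$. The main obstacle is exactly the Fredholm-reduction just flagged: forcing the compact, non-invertible operator $\mathcal{N}(\omega)$ into an ``identity plus compact'' framework where Steinberg applies cleanly, and verifying that the resulting exceptional set of poles does not obstruct analyticity on the region of interest, are the technically demanding points; the remaining work is routine book-keeping on the analyticity of $H_0^{(1)}$ and its $\bm x$-derivatives.
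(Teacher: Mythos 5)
Your skeleton coincides with the paper's: the same single-layer ansatz on the virtual boundary $\partial B$, the same observation that $\kappa_p,\kappa_s$ are linear in $\omega$ and that $\Phi$ is holomorphic in the wavenumber, so that the kernel $\mathbb{E}(\bm x,\bm y)$ --- smooth because $\mathrm{dist}(\Gamma_0,\partial B)>0$ --- makes $\mathcal{N}(\omega)$ and $\mathcal{S}(\omega)$ operator-analytic, and the same final composition $\bm u=\mathcal{S}(\omega)\bm\varphi(\cdot,\omega)$. Where you differ is precisely at the crucial step. The paper disposes of it in one line: ``since $\bm h(\bm x,\omega)$ depends analytically on $\omega$ and $\mathcal{N}$ has dense range, we know that $\bm\varphi$ depends analytically on $\omega$.'' As you correctly sensed, that inference is not sound as stated: dense range of a compact operator only yields an unbounded, densely defined inverse, and analyticity of $\bm h(\cdot,\omega)$ does not transfer to $\bm\varphi(\cdot,\omega)$ without a bounded-invertibility or Fredholm structure. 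Your plan to reroute through a second-kind equation $(I+\mathcal{K}(\omega))\bm\varphi=\tilde{\bm h}(\omega)$ and Steinberg's analytic Fredholm theorem is the right repair, and on this point your proposal is more rigorous than the paper's own proof.

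Two caveats on your repair. First, your parenthetical fallback --- ``restricting to the range of $\mathcal{N}$ and inverting on the resulting closed subspace'' --- cannot work: the range of a compact injective operator on the infinite-dimensional space $\textbf{L}^{2}(\partial B)$ is never closed, which is exactly the ill-posedness that the Tikhonov regularization of Section 4 exists to handle. Second, your viable route (layer potentials on $\partial D$ coupled with the Dirichlet and impedance conditions) abandons the $\partial B$-ansatz, and that is a feature rather than a defect: the representation on $\partial B$ presumes that $\bm u$ extends as a Navier solution to all of $B$, i.e.\ that $\mathcal{N}\bm\varphi=\bm h$ is actually solvable in $\textbf{L}^{2}(\partial B)$ --- a fact neither you nor the paper verifies for the solutions $\bm u_j$ of the mixed problem appearing in Theorem 3.1. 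A combined-layer representation on $\partial D_j$ with the known impedance $\chi_j$ gives a genuinely Fredholm second-kind system, whence a resolvent meromorphic in $\omega$ with a discrete pole set, analytic in particular at the real frequencies $\omega\in(a,b)$ where the mixed problem is uniquely solvable --- which is all the analytic-continuation argument in the proof of Theorem 3.1 requires. In short: same architecture as the paper, but your flagged Fredholm reduction both identifies and (modulo the closed-range slip) repairs the genuine gap in the paper's one-line justification of the density's analyticity.
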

\begin{proof}
	We seek the displacement field in the form as
	\begin{equation}\label{lemma1}
		\bm u(\bm x)=\int_{\partial B}\mathbb{E}(\bm{x},\bm{y})\bm\varphi(\bm y){\rm ds}(\bm y),
		~~ \bm{x}\in {D},
	\end{equation}
	with $\bm\varphi=(\varphi_1,\varphi_2)^\top$ and
	\begin{equation}
		\mathbb{E}(\bm{x},\bm{y})=
		\left(\begin{array}{cc}
			\frac{\partial \Phi(\kappa_{p}| \bm{x}- \bm{y}|)}{\partial x_1} &  \frac{\partial \Phi(\kappa_{s}| \bm{x}- \bm{y}|)}{\partial x_2} \\
			\frac{\partial \Phi(\kappa_{p}| \bm{x}- \bm{y}|)}{\partial x_2} & -\frac{\partial \Phi(\kappa_{s}| \bm{x}- \bm{y}|)}{\partial x_1} \\
		\end{array} \right).\nonumber
	\end{equation}
	Since the fundamental solution to the Helmholtz equation depends analytically on wavenumber $\kappa_p$ or $\kappa_s$ \cite{Colton2013}, together with $\kappa_{p}=\omega\sqrt{\frac{\rho}{\lambda+2\mu}}$ and $\kappa_{s}=\omega\sqrt{\frac{\rho}{\mu}}$, and thus the kernel $\mathbb{E}(\bm{x},\bm{y})$ of \eqref{lemma1} depends analytically on $\omega$.
	Given the measurement ${\bm h}=({\bm f}|_{\Gamma_0},{\bm t}|_{\Gamma_0})^\top,$ then the density function ${\bm\varphi}\in{\bm L}^2(\partial B)$ is supposed to satisfy
	\begin{align}\label{jingque}
		\mathcal{N}{\bm\varphi}={\bm h}.
	\end{align}
	 Since $\bm h(\bm x, \omega)$ depends analytically on $\omega\in\mathbb{C}$ and $\mathcal{N}$ has dense range, we know that $\bm\varphi$ depends analytically on $\omega\in\mathbb{C}$.
	From the above representation (\ref{lemma1}), it can be seen that the displacement
	field $\bm u(\bm x, \omega)$ depends analytically on $\omega$.
\end{proof}
We are now in the position to prove the main uniqueness result, which has be presented by Theorem \ref{thm: unique}.
\begin{proof1}
	Assume that $D_1$ and $D_2$ are two different  elastic bodies with impedance $\chi_1$ and $\chi_2,$ respectively that generate the same Cauchy data pair $(\bm f, \bm t)$ on $\Gamma_0$, $\Gamma_m^{(j)}=\partial D_j\backslash \Gamma_0$. Let $\bm u_j(\bm x, \omega), j=1,2$ be the solutions to
	\begin{equation}\label{Th1-1}
		\begin{split}
			\begin{cases}
				\nabla\cdot \bm \sigma(\bm u_j) +\rho\omega^2 \bm u_j=0,~~~& {\rm in}\ {D_j}, \\
				\bm u_j={\bm f(\bm x, \omega)},~~~&{\rm on}\ \Gamma_0,\\
				{ T_{n_j}\bm u_j+{\rm i}\omega{\chi_j}\bm u_j=0},~~&{\rm on}\ \Gamma_m^{(j)}.
			\end{cases}
		\end{split}
	\end{equation}
	
	First, we prove $\Gamma_m^{(1)}=\Gamma_m^{(2)}$. It should be noted that ${\bm f(\bm x, \omega)}\neq0$.  Assume on the contradiction that $\Gamma_m^{(1)}\neq\Gamma_m^{(2)}$. Following the idea in \cite{Isakov} for acoustic scattering, without loss of generality we suppose that there is a
	point in $D_1\setminus \overline{D_2}$. Otherwise we may switch the notations $D_1$ and $D_2$ if necessary. Then we can always find a domain $D^*\subset D_1$ such that
	$\partial D^*=\Lambda_1\cup\Lambda_2,\Lambda_1\subset\Gamma_m^{(1)},\Lambda_2\subset\Gamma_m^{(2)}$, $\Lambda_2\in (\overline{D_1}\cap\overline{D_2})$ (see figure \ref{fig1}).
	
	\begin{figure}[htp]
		\begin{center}
			{\includegraphics[angle=0,
				width=0.4\linewidth]{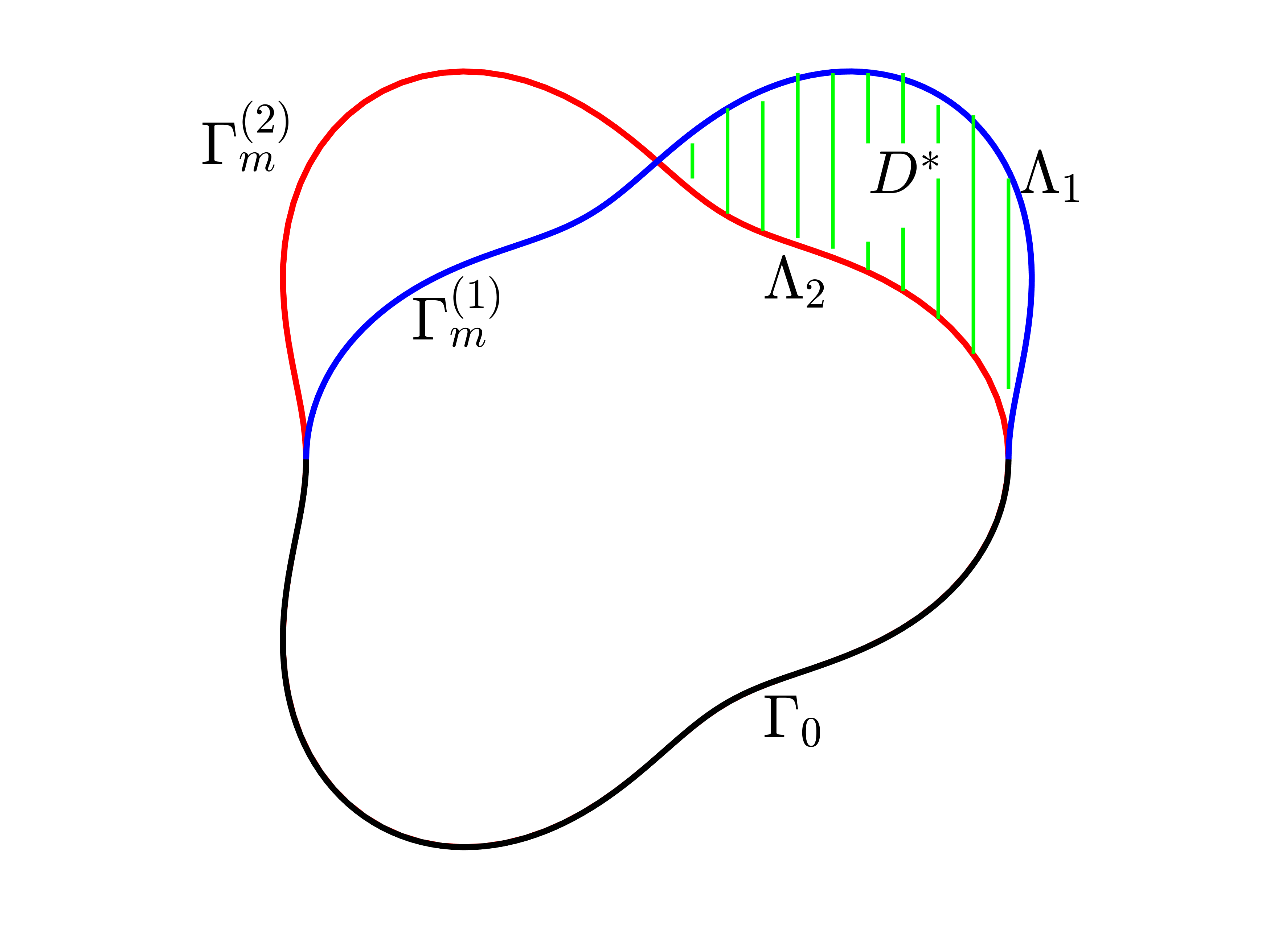}}%
			\caption{Sketch map}\label{fig1}
		\end{center}
	\end{figure}
	Since $\Lambda_1\subset\Gamma_m^{(1)}$, from the last equation (\ref{Th1-1}) we have
	\begin{equation}\label{Th1-2}
		T_{n_1}\bm u_1(\bm x, \omega)=-{\rm i}\omega\chi_1\bm u_1(\bm x, \omega).
	\end{equation}
	Since $\Lambda_2\subset\Gamma_m^{(2)}$, from the last equation (\ref{Th1-1}) we also have
	\begin{equation}\label{Th1-3}
		T_{n_2}\bm u_2(\bm x, \omega)=-{\rm i}\omega\chi_2\bm u_2(\bm x, \omega).
	\end{equation}
	Since $\bm{u}_1(\bm x, \omega)=\bm{u}_2(\bm x, \omega)$ and $T_{n}\bm u_1(\bm x, \omega)=T_{n}\bm u_2(\bm x, \omega)$ on $\Gamma_0$, the uniqueness of the Cauchy problem on $D_1\cap\overline{D^*}$ for elliptic equations will give $\bm{u}_1(\bm x, \omega)=\bm{u}_2(\bm x, \omega)$ on $\Lambda_2$, and thus $T_{n}\bm u_1(\bm x, \omega)=T_{n}\bm u_2(\bm x, \omega)$ on $\Lambda_2$. Again with the uniqueness of the Cauchy problem on $D^*$ will give $\bm{u}_1(\bm x, \omega)=\bm{u}_2(\bm x, \omega)$ in $D^*$.
	By $\Lambda_2\in (\overline{D_1}\cap\overline{D_2})$, together with $\bm u_1(\bm x, \omega)=\bm u_2(\bm x, \omega)$, and then
	\begin{equation}\label{Th1-4}
		T_{n_2}\bm u_1(\bm x, \omega)=-{\rm i}\omega\chi_2\bm u_1(\bm x, \omega).
	\end{equation}

	Let $\bm\nu=-\bm n_2, \chi=-\chi_2$ on $\Lambda_2$ and $\bm\nu=\bm n_1, \chi=\chi_1$ on $\Lambda_1$, combining
	equations (\ref{Th1-2}) and (\ref{Th1-4}), we have
	\begin{equation}\label{Th1boundary}
		T_{\bm\nu}\bm u_1(\bm x, \omega)=-{\rm i}\omega\chi\bm u_1(\bm x, \omega),~~{\rm on} ~~\partial D^*.
	\end{equation}
	Since $D^*\subset\overline{D_1}$, we know that $u_1(\bm x, \omega)$ satisfy the Navier equation in $D^*$ with the boundary condition (\ref{Th1boundary}).
	The first Betti formula yields
	\begin{eqnarray*}
		\int_{D^*}\left(E(\bm{u}_1,\bm{\overline{u}}_1)-\rho\omega^2|\bm{u}_1(\bm x, \omega)|^2\right)\mathrm{d} x
		=\int_{\partial D^*}T_{\bm\nu}\bm u_1(\bm x, \omega)\cdot{\bm{\overline{u}}_1(\bm x, \omega)}\mathrm{d} s,
	\end{eqnarray*}
	where

	\begin{eqnarray}\label{Eu1u1}
		E(\bm{u}_1,\bm{\overline{u}}_1)&=&\nabla\bm{u}_1(\bm x, \omega)\bm{:}\mathcal {C}\bm{:}\nabla\bm{\overline{u}}_1(\bm x, \omega)\\\nonumber &=&
		\lambda {\rm div}{\bm{u}_1}~{\rm div}{\bm{\overline{u}}_1}+\mu\sum_{p,q=1}^{2}\frac{\partial
			u_p}{\partial x_q}\left(\overline{\frac{\partial u_p}{\partial x_q}+\frac{\partial u_q}{\partial x_p}}\right) \nonumber\\
		&=&{\lambda}|{\rm div}{\bm{u}_1}|^2+2{\mu}\left(\bigg|\frac{\partial u_1}{\partial x_1}\bigg|^2+\bigg|\frac{\partial u_2}{\partial x_2}\bigg|^2\right)+{\mu}\left(\bigg|\frac{\partial u_1}{\partial x_2}+\frac{\partial u_2}{\partial x_1}\bigg|^2\right)\nonumber.
	\end{eqnarray}
	Combining the boundary condition (\ref{Th1boundary}), we have
	\begin{eqnarray*}
		\int_{D^*}\left(E(\bm{u}_1,\bm{\overline{u}}_1)-\rho\omega^2|\bm{u}_1(\bm x, \omega)|^2\right)\mathrm{d} \bm x
		+{\rm i}\omega\chi\int_{\partial D^*}|\bm{u}_1(\bm x, \omega)|^2 \mathrm{d} s=0.
	\end{eqnarray*}
	Taking the real part in the above equation, we have
	\begin{eqnarray}\label{Th1-10}
		\int_{D^*}\left(E(\bm{u}_1,\bm{\overline{u}}_1)-\rho\omega^2|\bm{u}_1(\bm x, \omega)|^2\right)\mathrm{d} \bm x=0,~~~{\rm for ~all } ~~\omega\in(a,b).
	\end{eqnarray}
	The analytical dependance of $u_1(\bm x, \omega)$ on $\omega$ implies that equation (\ref{Th1-10}) is valid  for $\omega\in\mathbb{C}$.
	
	Now choosing $\omega=1+{\rm i}\delta$ we get $\omega^2=1-\delta^2+2{\rm i}\delta$, then the equation (\ref{Th1-10}) will be
	\begin{eqnarray}\label{Th1-11}
		\int_{D^*}\left(E(\bm{u}_1,\bm{\overline{u}}_1)-\rho(1-\delta^2)|\bm{u}_1(\bm x, \omega)|^2\right)\mathrm{d} \bm x+2{\rm i}\rho\delta\int_{D^*}|\bm{u}_1(\bm x, \omega)|^2\mathrm{d} \bm x=0,
	\end{eqnarray}
	which implies
	\begin{eqnarray}\label{Th1-12}
		\rho\delta\int_{D^*}|\bm{u}_1(\bm x, \omega)|^2\mathrm{d} \bm x=0, ~~~{\rm for ~all } ~~\omega=1+{\rm i}\delta,~\delta\in\mathbb{R}
	\end{eqnarray}
	by taking the imaginary part of the equation (\ref{Th1-11}).
	Again using the analytical dependance, we get
	\begin{eqnarray}\label{Th1-12}
		\bm{u}_1(\bm x, \omega)=0 ~~{\rm in }~D^*
	\end{eqnarray}
	for all $\omega\in\mathbb{C}$. By the uniqueness of the analytic
	continuation, thus $\bm{u}_1(\bm x, \omega)=0$ in $D_1$ for all $\omega\in\mathbb{C}$. This yields $$\bm{f}(\bm x, \omega)=\bm{u}_1(\bm x, \omega)|_{\Gamma_0}=0,$$  which contradicts the fact that $\bm{f}(\bm x, \omega)\neq0$. Hence $\Gamma_m^{(1)}=\Gamma_m^{(2)}=\Gamma_m$, i.e., $D_1=D_2=D$.
	
	Next we will check $\chi_1=\chi_2$. On $\Gamma_0$, we have
	$$\bm{u}_1(\bm x, \omega)=\bm{u}_2(\bm x, \omega)$$
	and
	$$T_{n}\bm u_1(\bm x, \omega)=T_{n}\bm u_2(\bm x, \omega),$$
	the uniqueness of the Cauchy problem for elliptic equations will give
	\begin{equation}
		\bm{u}_1(\bm x, \omega)=\bm{u}_2(\bm x, \omega)~~~{\rm in}~~ D.
	\end{equation}
	Together with $T_{n}\bm u_1(\bm x, \omega)=-{\rm i}\omega\chi_1\bm u_1(\bm x, \omega)$ and $T_{n}\bm u_2(\bm x, \omega)=-{\rm i}\omega\chi_2\bm u_2(\bm x, \omega)$ on $\Gamma_m$,  we have ${\rm i}\omega\chi_1\bm u_1(\bm x, \omega)={\rm i}\omega\chi_2\bm u_1(\bm x, \omega)$, i.e., ${\rm i}\omega(\chi_1-\chi_2)\bm u_1(\bm x, \omega)=0$ on $\Gamma_m$. If  $\bm u_1(\bm x, \omega)=0$ on any open subset $\Gamma\subset\Gamma_m$, we have $T_{n}\bm u_1(\bm x, \omega)=-{\rm i}\omega\chi_1\bm u_1(\bm x, \omega)=0$ on $\Gamma$.  The uniqueness of the Cauchy problem for elliptic equations will give $\bm u_1(\bm x, \omega)\equiv0$ in $D$. This is a contradiction to the behavior of $\bm{u}_1(\bm x, \omega)$ on  $\Gamma_0$ with $\bm f(\bm x,\omega)\neq0$. Thus we have $\chi_1-\chi_2=0$ on $\Gamma_m$, and the proof is completed.
\end{proof1}

\section{Reconstruction}
In this section, we are concerned with the numerical method to reconstruct both the shape and the impendance function from the measurement ${\bm h}|_{\Gamma_m}.$ The overall approach is to divide the reconstruction procedure into two core steps: In the first step, we reconstruct the unknown displacement ${\bm u}$ by solving a Cauchy problem; In the second step, a reconstruction method based on the Newton iteration method will be proposed.

\subsection{The Cauchy problem}\label{subsec: Cauchy}
We first solve the Cauchy problem \eqref{jingque}. Taking the ill-posedness of the inverse problem into consideration, we instead solve the following perturbed equation
\begin{equation}\label{raodong}
	\mathcal{N}\bm\varphi^\delta=\bm {h}^\delta.
\end{equation}
Here, $\bm {h}^\delta=(\bm {f}^\delta,\bm {t}^\delta)^\top\in
\textbf{{\rm L}}^2(\Gamma_0)\times \textbf{{\rm L}}^2(\Gamma_0)$ is
measured noisy data satisfying
$$
\|\bm {h}^\delta-\bm {h}\|\leq\delta\|\bm{h}\|:=\epsilon,
$$
where $\delta$ is the noise level.

Seeking for a regularized solution to equation (\ref{jingque}) is to solve the following equation:
\begin{equation}\label{zhengzehua}
	{\alpha} \bm\varphi^\delta_{{\alpha}}
	+\mathcal{N}^*\mathcal{N}\bm\varphi^\delta_{{\alpha}}=\mathcal{N}^*\bm{h}^\delta,\quad{\alpha}>0,\nonumber
\end{equation}
where the regularization parameter $\alpha>0$ is chosen by Morozov discrepancy principle \cite{Sun14,Sun17,Sun172}. To be specific, the regularization parameter $\alpha$ is determined by finding the zero of
$G(\alpha):=\|\mathcal{N}\bm\varphi-\bm
h^\delta\|^2-\epsilon^2$.

If the regularization parameter is fixed by $\alpha^*$, then regularized solution $\bm\varphi^\delta_{\alpha^*}$ can be given by
$\bm\varphi^\delta_{\alpha^*}=(\alpha^*I + \mathcal{N}^*\mathcal{N})^{-1}\mathcal{N}^*\bm h^\delta$.
Correspondingly, the approximate displacement $\bm {u}^\delta=(u_1^\delta,u_2^\delta)^\top$ can be given by
\begin{align}\label{eq: udelta}
\bm {u}^\delta(\bm x)=\mathcal{S}\bm\varphi^\delta_{\alpha^*}(\bm x).
\end{align}

\subsection{Newton-type method}
So far, we have transformed the nonlinear and ill-posed inverse problem into a well-posed nonlinear problem. In this subsection, we shall adopt the Newton-type method to solve this nonlinear problem.

Once the approximated displacement \eqref{eq: udelta} is obtained, we are now ready to propose the
Newton-type iteration scheme to simultaneously reconstruct both the boundary $\partial D$ and impedance function $\chi$. For a given function $\bm g$ ($\bm g$ can be zero) and the approximated
displacement $\bm {u}^\delta(\bm x)$, we can define the mapping operator $\mathcal{F}$ from the boundary contour $\gamma$ and impedance function $\chi$ to the given function $\bm g$ by
\begin{align}\label{eq: F}
\mathcal{F}(\gamma,\chi)=T_{\bm n}\bm u^\delta(\gamma)+{\rm i}\omega\chi\bm u^\delta(\gamma).
\end{align}
In addition, it should to seek the impedance boundary where the boundary curve $\gamma$ and impedance function $\chi$ satisfy
$$\mathcal{F}(\gamma,\chi)=\bm g.$$

Our reconstruction algorithm can be summarized as follows:

\begin{enumerate}
	\item 	Given the noisy Cauchy data pair ${\bm h}^\delta=({\bm f}^\delta, {\bm t}^\delta)$ on $\Gamma_0$, we solve the Cauchy problem \ref{subsec: Cauchy} to derive the approximate displacement \eqref{eq: udelta} and the corresponding traction
	\begin{align*}
		\bm {t}^\delta(\bm x)=T_n\mathcal{S}\varphi^\delta_{\alpha^*}(\bm x).
	\end{align*}
	\item  Let $\left\{\gamma_n, \chi_n: n=0,1,2...\right\}$ be the current approximate to the boundary and the impedance, respectively. We update the boundary curve $\gamma_n$ and the impedance function $\chi_n$ by the following two procedures
	\begin{align}\label{eq: 4_3}
		&\begin{cases}
			\mathcal{F}(\gamma_n,\chi_n)+\mathcal{F'}(\gamma_n,\chi_n)\triangle \gamma_n=\bm g\\
			\gamma_{n+1}=\gamma_n+\triangle\gamma_n
		 \end{cases}\\\label{eq: 4_4}
		&\begin{cases}
			\mathcal{F}(\gamma_{n+1},\chi_n)+i\omega\mathcal{M}_D(\gamma_{n+1})\triangle \chi_n=\bm g\\
			\chi_{n+1}=\chi_n+\triangle\chi_n
		\end{cases}
	\end{align}
	respectively.
	\item Once we have gotten $\triangle \gamma_{n}$ and $\triangle\chi_n$, we update $\gamma_{n}$ and $\chi_n$ to $\gamma_{n+1}$ and $\chi_{n+1}$, respectively.
\end{enumerate}

We would like to point out that, during the iteration \eqref{eq: 4_3} and \eqref{eq: 4_4}, the Fr\'{e}chet derivatives of $\bm {u}^\delta(\bm x)$ and $\bm {t}^\delta(\bm x)$ with respect to $\bm x$ can be calculated explicitly as
\begin{equation}
	\nabla\bm {u}^\delta(\bm x)=
	\left[
	\begin{matrix}
		\partial_{x_{1}}u_{1}^\delta & \partial_{x_{2}}u_{1}^\delta \\
		\partial_{x_{1}}u_{2}^\delta & \partial_{x_{2}}u_{2}^\delta
	\end{matrix}
	\right],\quad
	\nabla\bm {t}^\delta(\bm x)=
	\left[
	\begin{matrix}
		\partial_{x_{1}}t_{1}^\delta & \partial_{x_{2}}t_{1}^\delta \\
		\partial_{x_{1}}t_{2}^\delta & \partial_{x_{2}}t_{2}^\delta
	\end{matrix}
	\right],\nonumber
\end{equation}
where
\begin{equation}
	\begin{split}
		\partial_{x_{1}}u_{1}^\delta=\int_{\partial B}\left(\partial_{x_{1}}\mathbb{E}_{11}(\bm x,\bm y)\varphi^\delta_{\alpha^*,1}(\bm{y})+\partial_{x_{1}}\mathbb{E}_{12}(\bm x,\bm y)\varphi^\delta_{\alpha^*,2}(\bm{y})\right)\mathrm{d}s(\bm{y}), \\
		\partial_{x_{2}}u_{1}^\delta=\int_{\partial B}\left(\partial_{x_{2}}\mathbb{E}_{11}(\bm x,\bm y)\varphi^\delta_{\alpha^*,1}(\bm{y})+\partial_{x_{2}}\mathbb{E}_{12}(\bm x,\bm y)\varphi^\delta_{\alpha^*,2}(\bm{y})\right)\mathrm{d}s(\bm{y}), \\
		\partial_{x_{1}}u_{2}^\delta=\int_{\partial B}\left(\partial_{x_{1}}\mathbb{E}_{21}(\bm x,\bm y)\varphi^\delta_{\alpha^*,1}(\bm{y})+\partial_{x_{1}}\mathbb{E}_{22}(\bm x,\bm y)\varphi^\delta_{\alpha^*,2}(\bm{y})\right)\mathrm{d}s(\bm{y}), \\
		\partial_{x_{2}}u_{2}^\delta=\int_{\partial B}\left(\partial_{x_{2}}\mathbb{E}_{21}(\bm x,\bm y)\varphi^\delta_{\alpha^*,1}(\bm{y})+\partial_{x_{2}}\mathbb{E}_{22}(\bm x,\bm y)\varphi^\delta_{\alpha^*,2}(\bm{y})\right)\mathrm{d}s(\bm{y}),
	\end{split}
	\nonumber
\end{equation}
and
\begin{align}
	\begin{split}
		\partial_{x_{1}}t_{1}^\delta=
		\int_{\partial B}\partial_{x_{1}}\mathbb{T}_{11}(\bm x,\bm y)\varphi^\delta_{\alpha^*,1}(\bm{y})+\partial_{x_{1}}\mathbb{T}_{12}(\bm x,\bm y)\varphi^\delta_{\alpha^*,2}(\bm{y})ds(\bm{y}), \\
		\partial_{x_{2}}t_{1}^\delta=
		\int_{\partial B}\partial_{x_{2}}\mathbb{T}_{11}(\bm x,\bm y)\varphi^\delta_{\alpha^*,1}(\bm{y})+\partial_{x_{2}}\mathbb{T}_{12}(\bm x,\bm y)\varphi^\delta_{\alpha^*,2}(\bm{y})ds(\bm{y}), \\
		\partial_{x_{1}}t_{2}^\delta=
		\int_{\partial B}\partial_{x_{1}}\mathbb{T}_{21}(\bm x,\bm y)\varphi^\delta_{\alpha^*,1}(\bm{y})+\partial_{x_{1}}\mathbb{T}_{22}(\bm x,\bm y)\varphi^\delta_{\alpha^*,2}(\bm{y})ds(\bm{y}), \\
		\partial_{x_{2}}t_{2}^\delta=
		\int_{\partial B}\partial_{x_{2}}\mathbb{T}_{21}(\bm x,\bm y)\varphi^\delta_{\alpha^*,1}(\bm{y})+\partial_{x_{2}}\mathbb{T}_{22}(\bm x,\bm y)\varphi^\delta_{\alpha^*,2}(\bm{y})ds(\bm{y}),
	\end{split}
	\nonumber
\end{align}
with
\begin{eqnarray*}
	&~& \partial_{x_{\imath}}\mathbb{T}_{1\jmath}( \bm{x}, \bm{y})\\
	&=&\left[(\lambda+2\mu)\frac{\partial^2  \mathbb{E}_{1\jmath}( \bm{x}, \bm{y})}{\partial x_1\partial x_\imath}+\lambda\frac{\partial^2  \mathbb{E}_{2\jmath}( \bm{x}, \bm{y})}{\partial x_2\partial x_\imath}\right]n_1(\bm x)+\mu\left(\frac{\partial^2 \mathbb{E}_{1\jmath}( \bm{x}, \bm{y})}{\partial x_2\partial x_\imath}+\frac{\partial^2  \mathbb{E}_{2\jmath}( \bm{x}, \bm{y})}{\partial x_1\partial x_\imath}\right)n_2(\bm x)\\
	&+&\left[(\lambda+2\mu)\frac{\partial  \mathbb{E}_{1\jmath}( \bm{x}, \bm{y})}{\partial x_1}+\lambda\frac{\partial  \mathbb{E}_{2\jmath}( \bm{x}, \bm{y})}{\partial x_2}\right]\frac{\partial n_1(\bm{x})}{\partial x_\imath}+\mu\left(\frac{\partial \mathbb{E}_{1\jmath}( \bm{x}, \bm{y})}{\partial x_2}+\frac{\partial  \mathbb{E}_{2\jmath}( \bm{x}, \bm{y})}{\partial x_1}\right)\frac{\partial n_2(\bm{x})}{\partial x_\imath},
\end{eqnarray*}
\begin{eqnarray*}
	&~& \partial_{x_{\imath}}\mathbb{T}_{2\jmath}( \bm{x}, \bm{y})\\
	&=&\mu\left(\frac{\partial^2 \mathbb{E}_{1\jmath}( \bm{x}, \bm{y})}{\partial x_2\partial x_\imath}+\frac{\partial^2  \mathbb{E}_{2\jmath}( \bm{x}, \bm{y})}{\partial x_1\partial x_\imath}\right)n_1(\bm x)+\left[\lambda\frac{\partial^2  \mathbb{E}_{1\jmath}( \bm{x}, \bm{y})}{\partial x_1\partial x_\imath}+(\lambda+2\mu)\frac{\partial^2  \mathbb{E}_{2\jmath}( \bm{x}, \bm{y})}{\partial x_2\partial x_\imath}\right]n_2(\bm x)\\
	&+&\mu\left(\frac{\partial \mathbb{E}_{1\jmath}( \bm{x}, \bm{y})}{\partial x_2}+\frac{\partial  \mathbb{E}_{2\jmath}( \bm{x}, \bm{y})}{\partial x_1}\right)\frac{\partial n_1(\bm{x})}{\partial x_\imath}+\left[\lambda\frac{\partial  \mathbb{E}_{1\jmath}( \bm{x}, \bm{y})}{\partial x_1}+(\lambda+2\mu)\frac{\partial  \mathbb{E}_{2\jmath}( \bm{x}, \bm{y})}{\partial x_2}\right]\frac{\partial n_2(\bm{x})}{\partial x_\imath}.
\end{eqnarray*}

To stop the iteration procedure, we define the following relative error
\begin{equation}\label{eq: error}
	E_{n}=\frac{\|\Delta \gamma_{n}\|_{L^{2}}}{\|\gamma_{n-1}\|_{L^{2}}}+\frac{\|\Delta \chi_{n}\|_{L^{2}}}{\|\chi_{n-1}\|_{L^{2}}},
\end{equation}
and choose some proper constant $\varepsilon >0$. When $E_{n}<\varepsilon$, we stop the iteration and take the current result as the reconstruction.

\subsection{Parameterization}
For simplicity, we assume that the boundary is starlike and can be represented in the parametric form
$$\partial D=\left\{r(\vartheta)(\cos\vartheta,\sin\vartheta),~\vartheta\in[0,2\pi)\right\}$$
where $r\in C^2\left([0,2\pi),\mathbb{R}_+\right)$ is a positive, twice continuously differentiable and $2\pi$-periodic function.
Take $$\Gamma_0=\{r(\vartheta)(\cos\vartheta,\sin\vartheta),~\vartheta\in[0,\pi)\}$$ as the known part coincides with the measured Cauchy data and $$\Gamma_m=\{r(\vartheta)(\cos\vartheta,\sin\vartheta),~\vartheta\in[\pi,2\pi)\}$$ as the missing part of the boundary with the impedance boundary condition.

To numerically approximate $r(\vartheta)$, we assume that $r(\vartheta)$ is represented as the trigonometric polynomials of degree less than or equal to
$N$, namely
\begin{eqnarray}\label{rounumber}
	r(\vartheta)=a_0+\sum_{\jmath=1}^N(a_{\jmath}\cos \jmath\vartheta+b_{\jmath}\sin \jmath\vartheta).
\end{eqnarray}

In the $n$-th ($n=1,2,\cdots$) iterative step, the radial function $r_n$ will be updated to $r_{n+1}$ by $\Delta r_n$. This process will be carried on by updating the Fourier coefficients $\bm a=(a_0,a_1,...,a_N,b_1,...,b_N)^\top$. Denote the Fourier coefficients at the $n$-th step by $\bm a^{(n)}=\left(a_0^{(n)},a_1^{(n)},...,a_N^{(n)},b_1^{(n)},...,b_N^{(n)}\right)^\top$. For notation simplify, we further denote by
$$
\mathcal{Q}^{(n)}=\bm t^\delta(B\bm a^{(n)}\hat{x})+{\rm i}\omega\chi_n\bm u^\delta(B\bm a^{(n)}\hat{x}),\quad n=1,2,\cdots,
$$
with $\hat{x}=(\cos\vartheta,\sin\vartheta)^\top$ and $B=(1,\cos \vartheta,...,\cos N\vartheta,\sin\vartheta,...,\sin N\vartheta)$. Then the Fourier coefficients in iterative process \eqref{eq: 4_3} will be updated by $\bm a^{(n+1)}=\bm a^{(n)}+\triangle\bm a^{(n)}$ by the following procedure
\begin{eqnarray*}
	\begin{cases}
		\mathcal{Q}^{(n)}+\nabla\mathcal{Q}^{(n)}\left(B\Delta\bm a^{(n)}\right)\hat{x}=\bm g(B\bm a^{(n)}\hat{x})\\
		\bm a^{(n+1)}=\bm a^{(n)}+\triangle \bm a^{(n)}
	\end{cases},\quad n=1,2,\cdots.
\end{eqnarray*}
Whilst the update impedance function $\chi_{n+1}$ in iterative process \eqref{eq: 4_4} can be reformulated as
\begin{eqnarray}
	\begin{cases}
		\mathcal{Q}^{(n+1)}+{\rm i}\omega\bm u^\delta(B\bm a^{(n+1)}\hat{x})\triangle \chi_n=\bm g(B\bm a^{(n+1)}\hat{x})\\
		\chi_{n+1}=\chi_n+\triangle\chi_n
	\end{cases},\quad n=1,2,\cdots.
\end{eqnarray}
Correspondingly, the stopping rule defined by \eqref{eq: error} is replaced by the following relative error
\begin{equation}
	E_{n}=\frac{\|\Delta \bm a^{(n)}\|_{L^{2}}}{\|\bm a^{(n-1)}\|_{L^{2}}}+\frac{\|\Delta \chi_{n}\|_{L^{2}}}{\|\chi_{n-1}\|_{L^{2}}}.
\end{equation}
For the chosen constant $\varepsilon >0$. When $E_{n}<\varepsilon$, we stop the iteration and take the current result as the reconstruction.

\section{Numerical Examples}
In this section, several numerical examples will be presented to illustrate the effectiveness of the proposed method. The Lem\'{e} constants $\lambda$ and $\mu$ are chosen to be $\lambda=\mu=1$. The inside of the elastic obstacle is assumed to be filled with
a homogeneous and isotropic elastic medium with a unit mass, i.e. $\rho=1$.  The initial guesses for the boundary and the impedance function are chosen to be a half circle, and a constant function, respectively. To approximate the radial function numerically, we take $N=8$ in equation (\ref{rounumber}). The regularization parameter is determined by the Morozov discrepancy principle.

In reality, we know all the information about the known part $\Gamma_0$ together with the Cauchy data pair $(\bm f, \bm t)$. Thus we can easily derive the equation $T_n\bm u+i\omega\chi\bm u=\bm g$ on $\Gamma_0$. Further, we can carry on iterative procedure on the whole boundary $\partial D$. The initial guess of the boundary is a circle. The results about the known part can provide a reference result for the reconstruction. To stop the iteration, the error tolerance $\varepsilon$ is chosen to be $10^{-5}.$

\begin{example}\label{EX1}
	In this example, we choose $D$ to be a bean-shaped domain, whose boundary $\partial D$ can be parameterized by
	$$\bm z_1(\vartheta)=\frac{1+0.8\cos\vartheta+0.2\sin2\vartheta}{1+0.7\cos\vartheta}(\cos\vartheta,\sin\vartheta).$$
	The parameterized surface impedance $\chi$ is given by $$\chi_1(\vartheta)=\sin^4\vartheta+1, \vartheta\in[\pi,2\pi).$$ The components $u_p$ and $u_s$ of the displacement $\bm u$ inside the elastic obstacle are given by $$u_p(\bm x)=H_0^{(1)}(\kappa_p|\bm x-\bm y_0|),~~~u_s(\bm x)=H_0^{(1)}(\kappa_s|\bm x-\bm y_0|),~~~\bm x \in \overline{D},$$ respectively,
	where $\bm y_0=(1,0)$, i.e. $\bm u(\bm x)=\nabla u_p(\bm x)+\nabla^\bot \bm u_s(\bm x)$.  Fix the frequency to be $\omega=3$.
	The virtual boundary $\partial B$ is chosen to be a circle centered at the origin with radius $4$. Take the initial guess for the boundary to be a circle centered at the origin with radius $0.3$. Figure \ref{figEX1:1} shows the reconstruction of the missing boundary and the impedance function under different noise levels $\delta\in\{0,1\%,5\%\}$. We can see from Figure \ref{figEX1:1} that the proposed method exhibits satisfactorily in reconstructing the boundary and the reconstruction improves as the noise decreases.
\end{example}
\begin{figure}[htp]
	\begin{center}
		\subfigure[Noise free]{\includegraphics[width=0.33\linewidth]{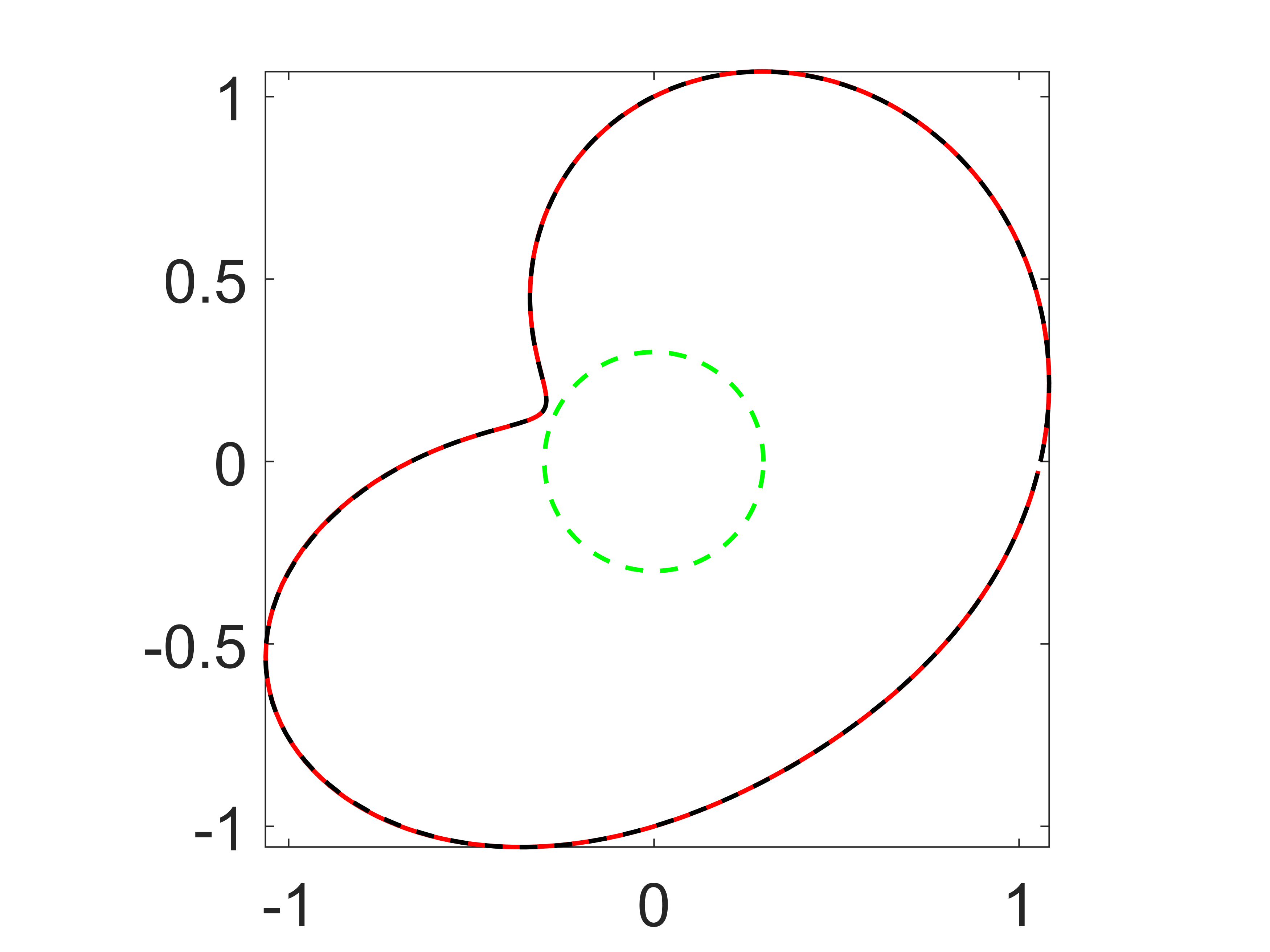}}%
		\subfigure[Noise 1$\%$]{\includegraphics[width=0.33\linewidth]{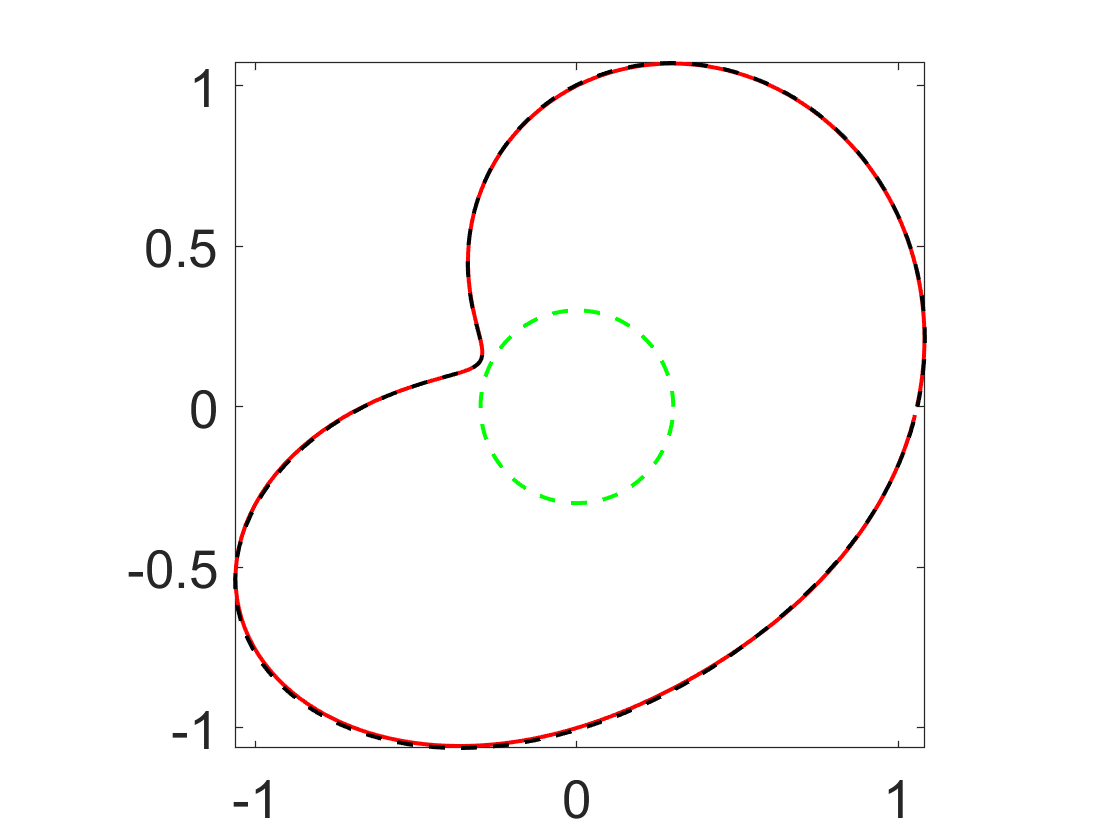}}%
		\subfigure[Noise 5$\%$]{\includegraphics[width=0.33\linewidth]{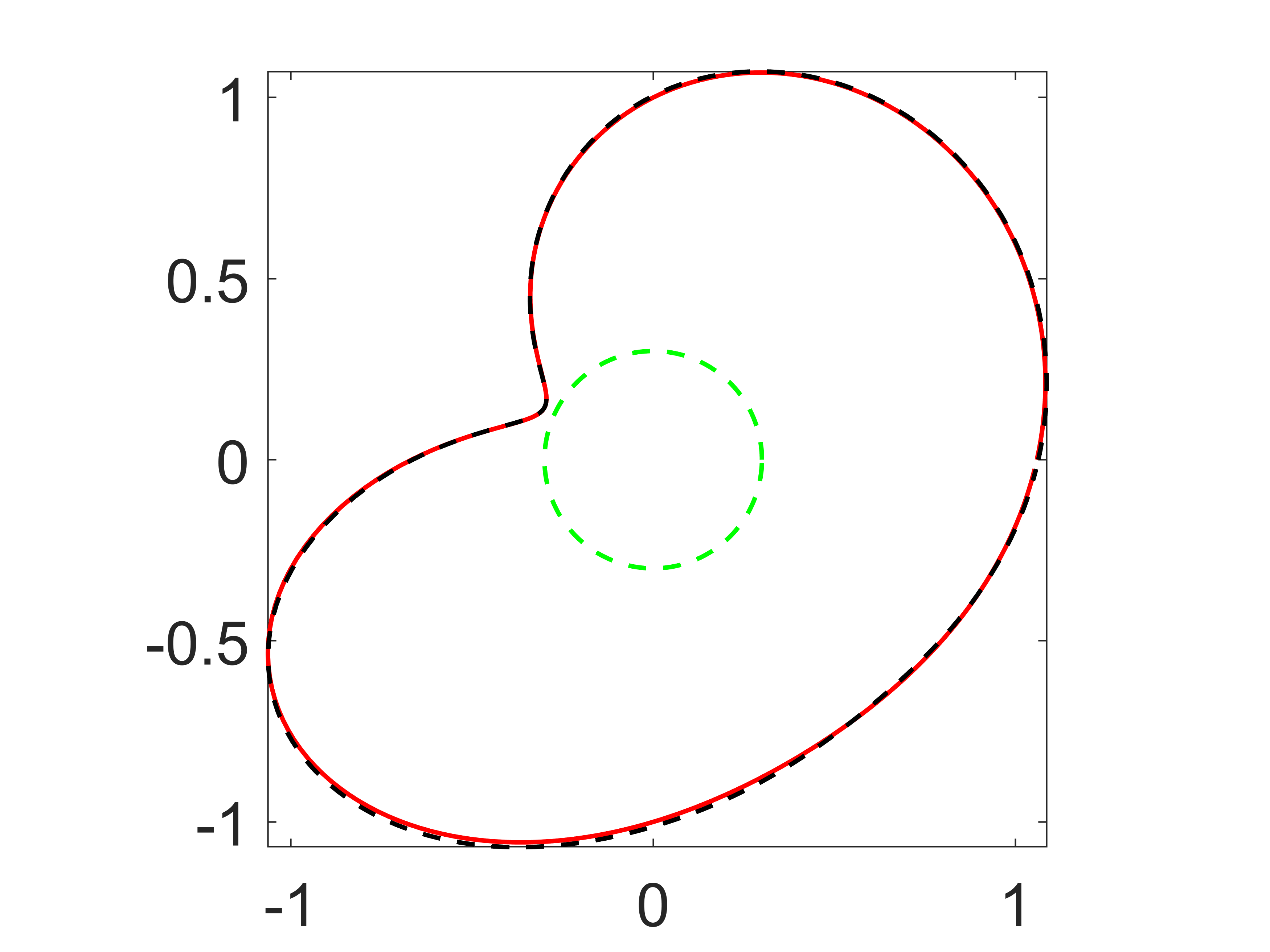}}\\%
		\subfigure[Noise free]{\includegraphics[width=0.33\linewidth]{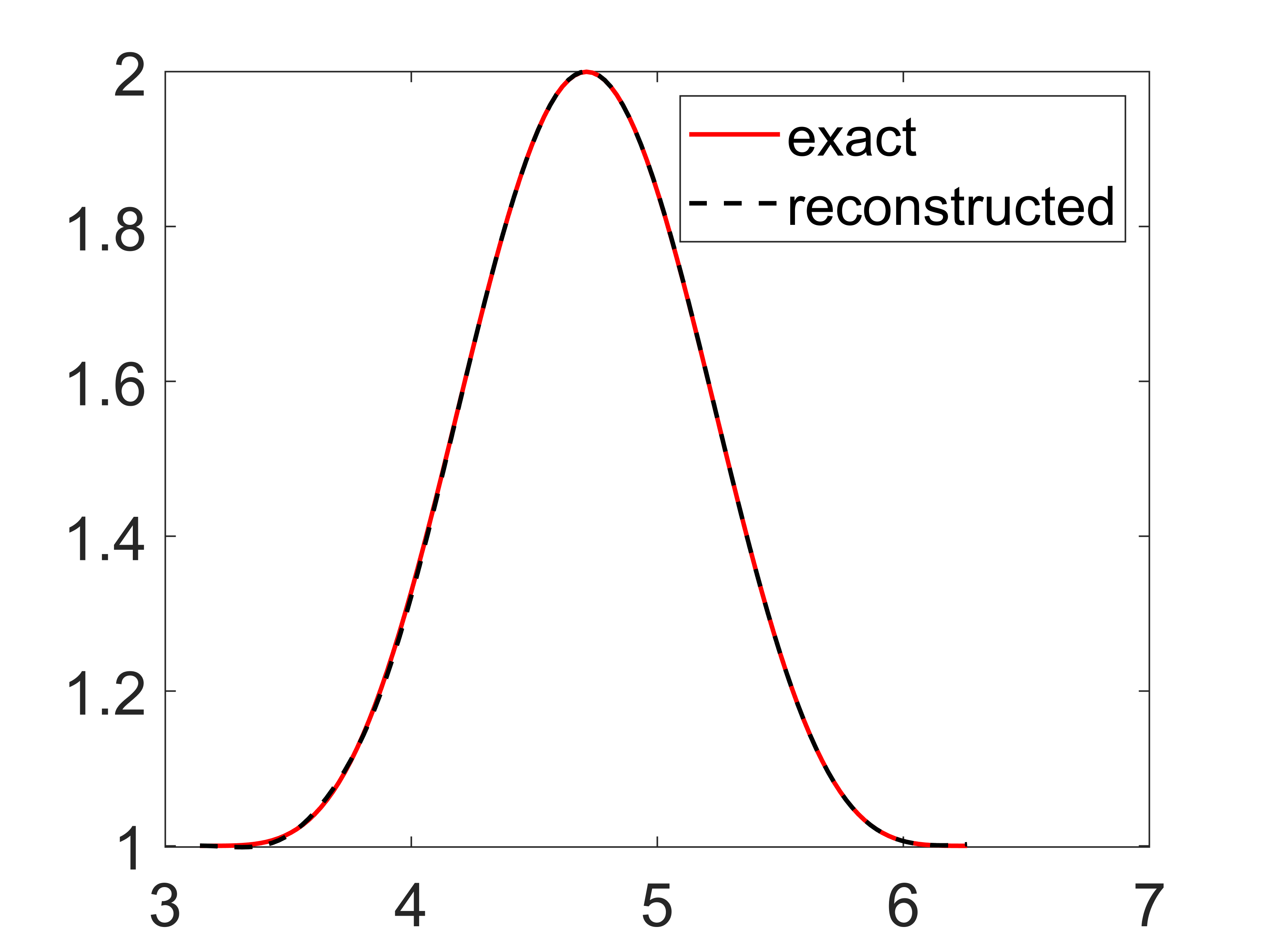}}%
		\subfigure[Noise 1$\%$]{\includegraphics[width=0.33\linewidth]{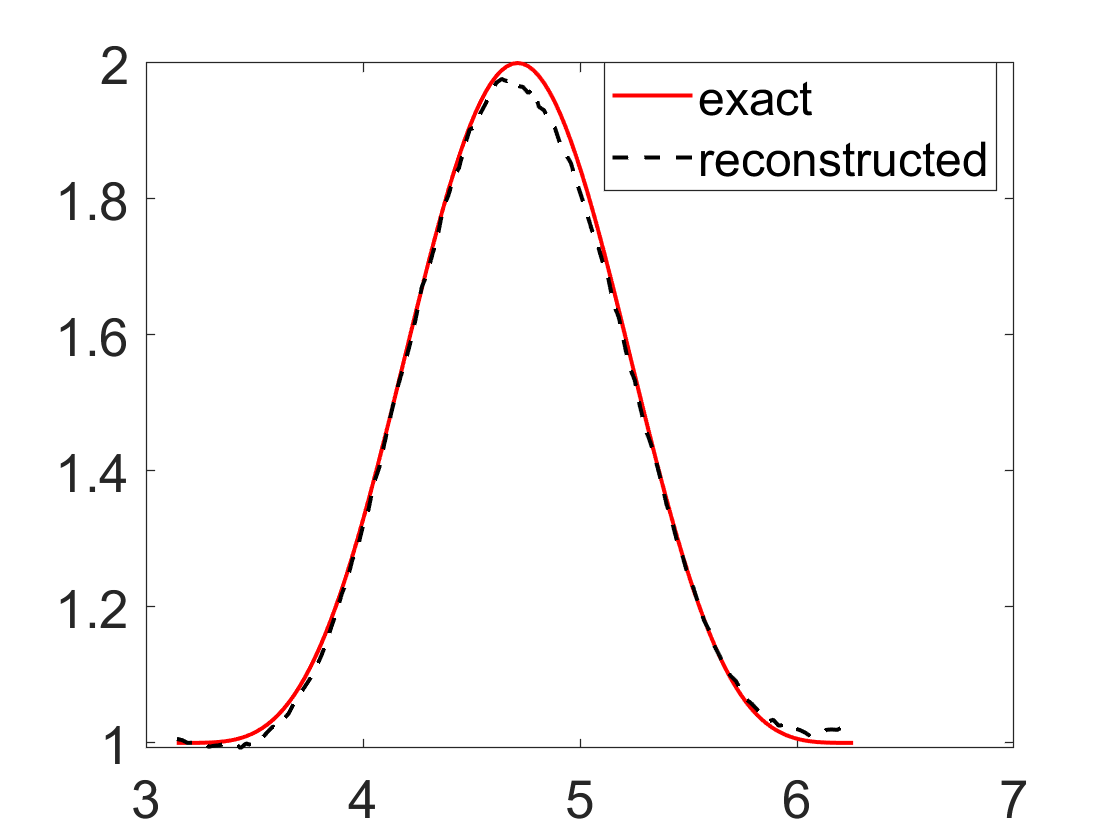}}%
		\subfigure[Noise 5$\%$]{\includegraphics[width=0.33\linewidth]{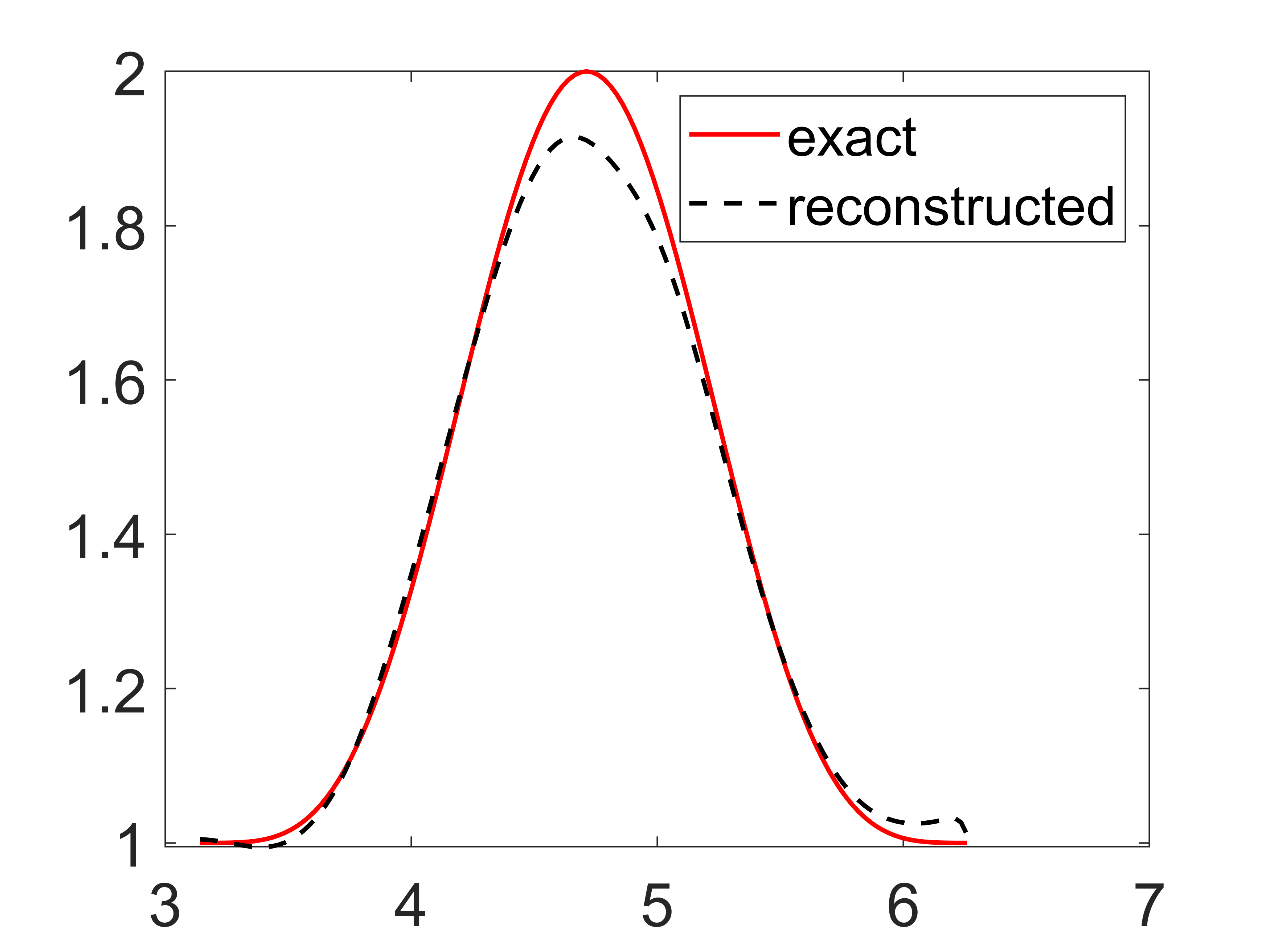}}\\%
		\caption{Reconstruction of the bean-shaped domain  and the impedance function under different noise levels $\delta\in\{0,1\%,5\%\}$.}\label{figEX1:1}
	\end{center}
\end{figure}

\begin{example}\label{EX2}
	In the second example, we consider the case in which the components $u_p$ and $u_s$ of the displacement $\bm u$ inside the elastic obstacle are respectively generated by $$u_p(\bm x)=\frac{{\rm i}}{4}H_0^{(1)}(\kappa_p|\bm x-\bm y_0|),~~~u_s(\bm x)=\frac{{\rm i}}{4}H_0^{(1)}(\kappa_s|\bm x-\bm y_0|),~~~\bm x \in \overline{D},$$
	i.e. $\bm u(\bm x)=\nabla u_p(\bm x)+\nabla^\bot \bm u_s(\bm x)$.
	Take the virtual boundary $\partial B$ to be a circle centered at the origin with radius $4$. The initial guess of the boundary is chosen to be a circle centered at the origin with radius $0.3$.

We first consider the case in which $D$ is a peanut-shaped domain with whose boundary $\partial D$ parameterized by
$$\bm z_2(\vartheta)=0.5\left(4\cos^2\vartheta+\sin^2\vartheta\right)^{\frac{1}{2}}(\cos\vartheta,\sin\vartheta).$$
Fix the frequency to be $\omega=5$, and take $\bm y_0=(4,-9)$. The parameterized surface impedance $\chi$ is described by $$\chi_2(\vartheta)=\sin^4\vartheta, \vartheta\in[\pi,2\pi).$$

In Figure \ref{figE2:1}, we display the reconstruction of the missing boundary and the impedance function under different noise levels $\delta\in\{0,1\%,5\%\}$. We can see from Figure \ref{figE2:1} that the reconstruction for the impedance function $\chi_2$ is insensitive to the noise level and the reconstruction for the peanut-shaped boundary is of high accuracy.

Further, we take $D$ to be a starfish-shaped domain, whose boundary $\partial D$ is described by
$$\bm z_3(\vartheta)=\left(1+0.2\cos5\vartheta\right)(\cos\vartheta,\sin\vartheta).$$
Take $\omega=3$, $\bm y_0=(4,9)$. The surface impedance is chosen to be a constant function $$\chi_3(\vartheta)=1, \vartheta\in[\pi,2\pi).$$
We plot the reconstruction for the starfish together with the impedance in Figure \ref{figE2:2}. We can see from Figure \ref{figE2:2} that the boundary curve is accurately reconstructed and the impedance function can be well-reconstructed when the exact data is utilized in the reconstruction. When there is some noise involved, the reconstruction may be influenced.
\end{example}

\begin{figure}[htp]
	\begin{center}
		\subfigure[Noise free]{\includegraphics[angle=0,
			width=0.33\linewidth]{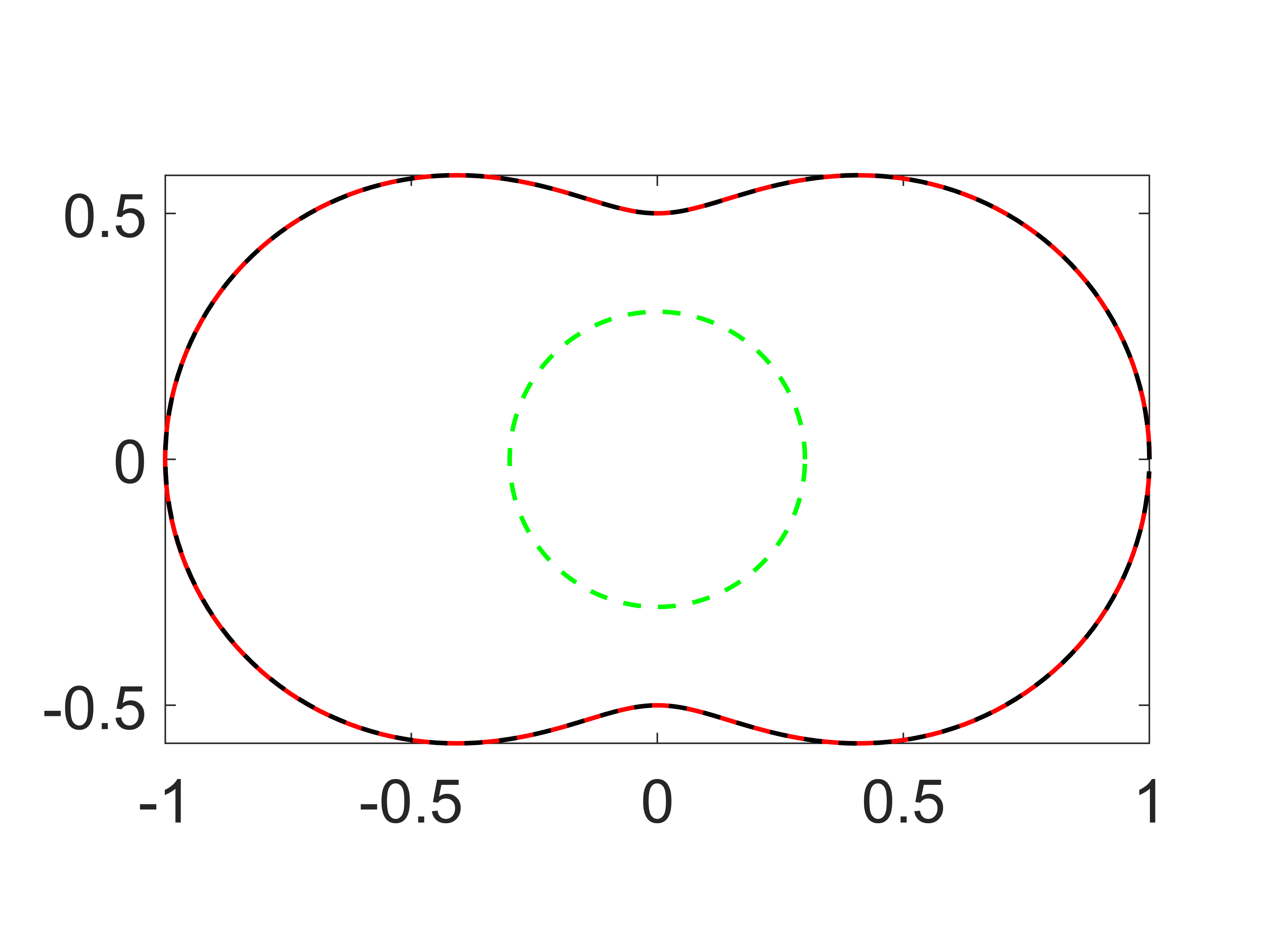}}%
		\subfigure[Noise 1$\%$]{\includegraphics[angle=0,
			width=0.33\linewidth]{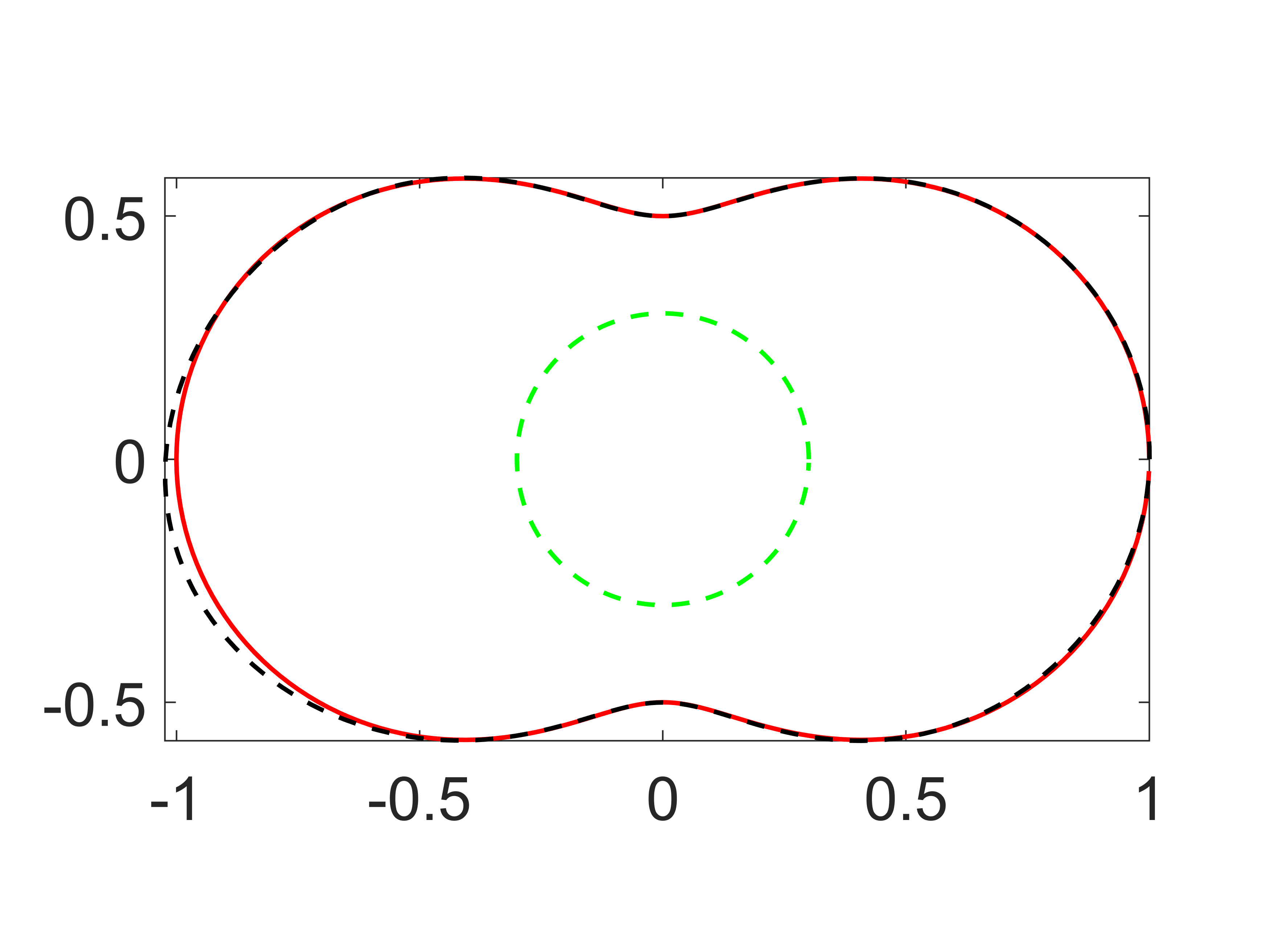}}%
		\subfigure[Noise 5$\%$]{\includegraphics[angle=0,
			width=0.33\linewidth]{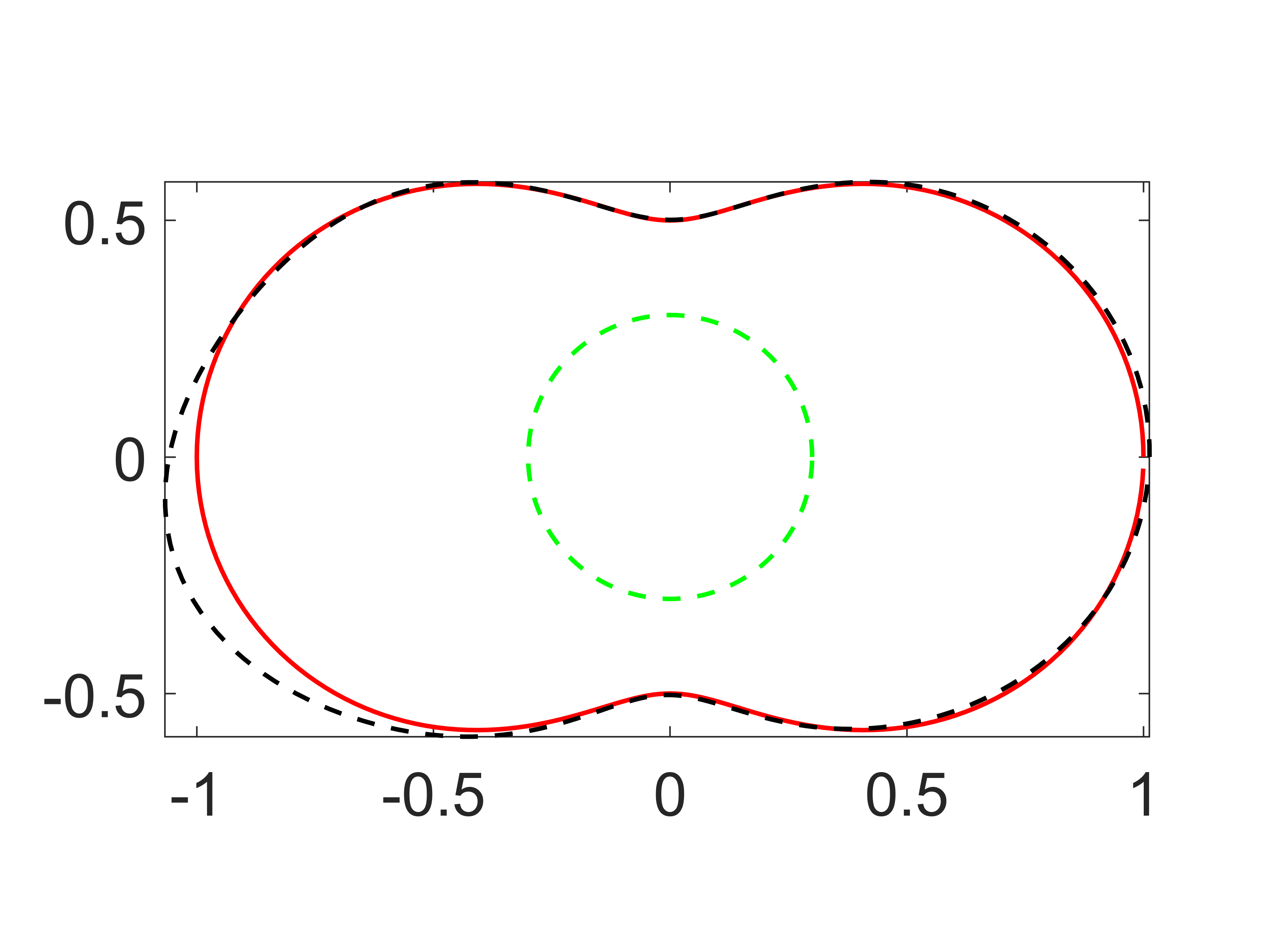}}\\%
		\subfigure[Noise free]{\includegraphics[angle=0,
			width=0.33\linewidth]{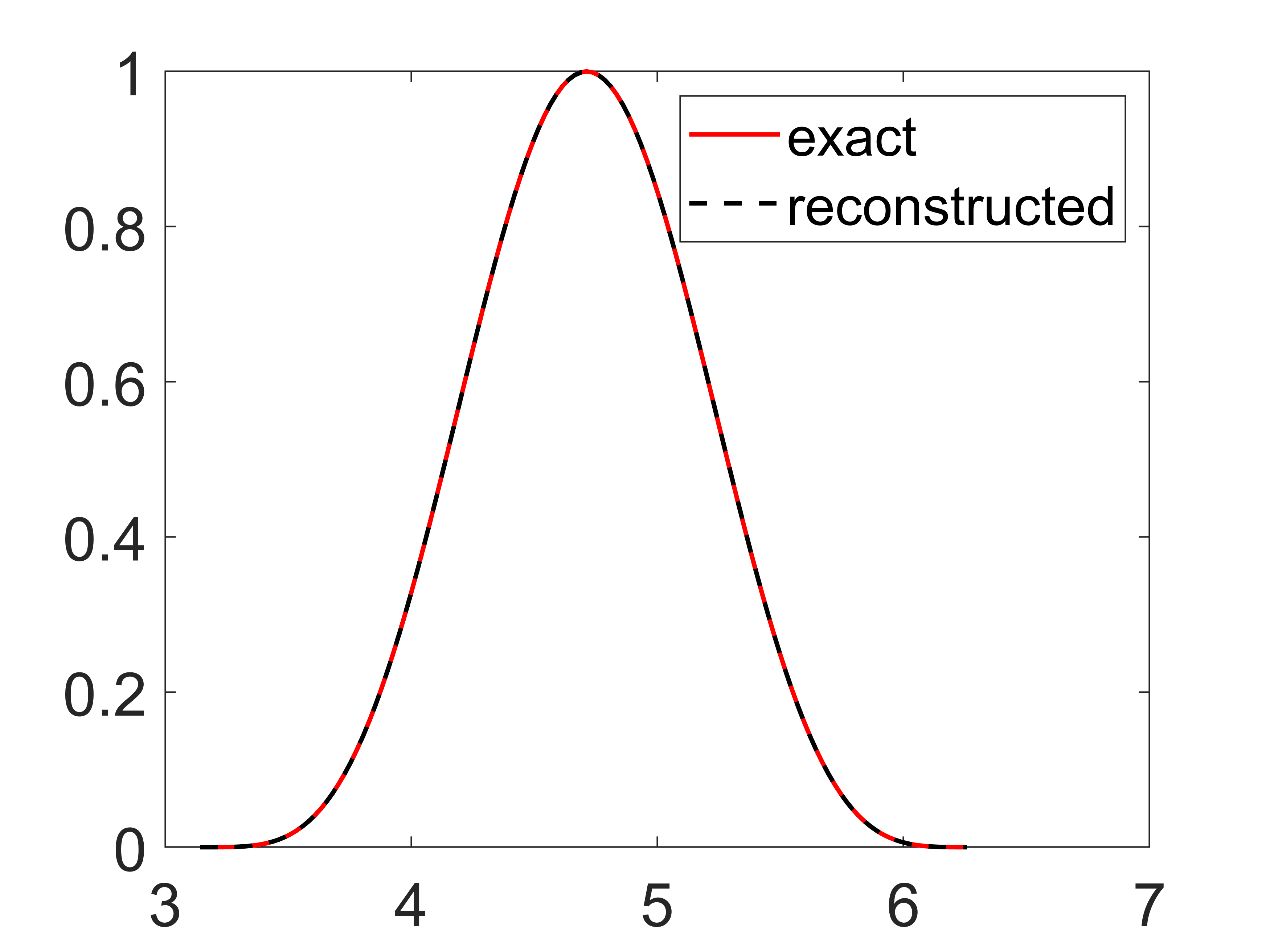}}%
		\subfigure[Noise 1$\%$]{\includegraphics[angle=0,
			width=0.33\linewidth]{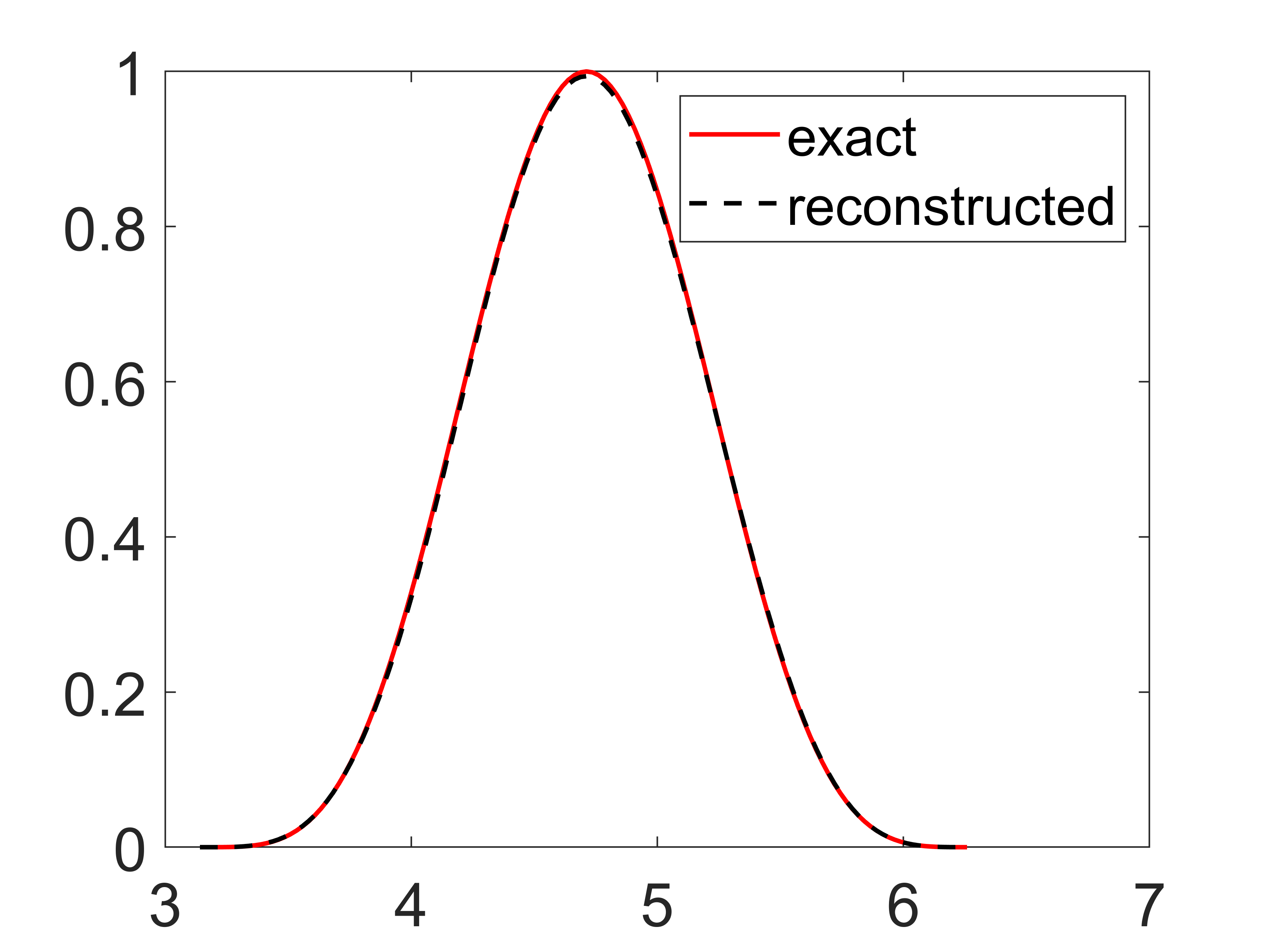}}%
		\subfigure[Noise 5$\%$]{\includegraphics[angle=0,
			width=0.33\linewidth]{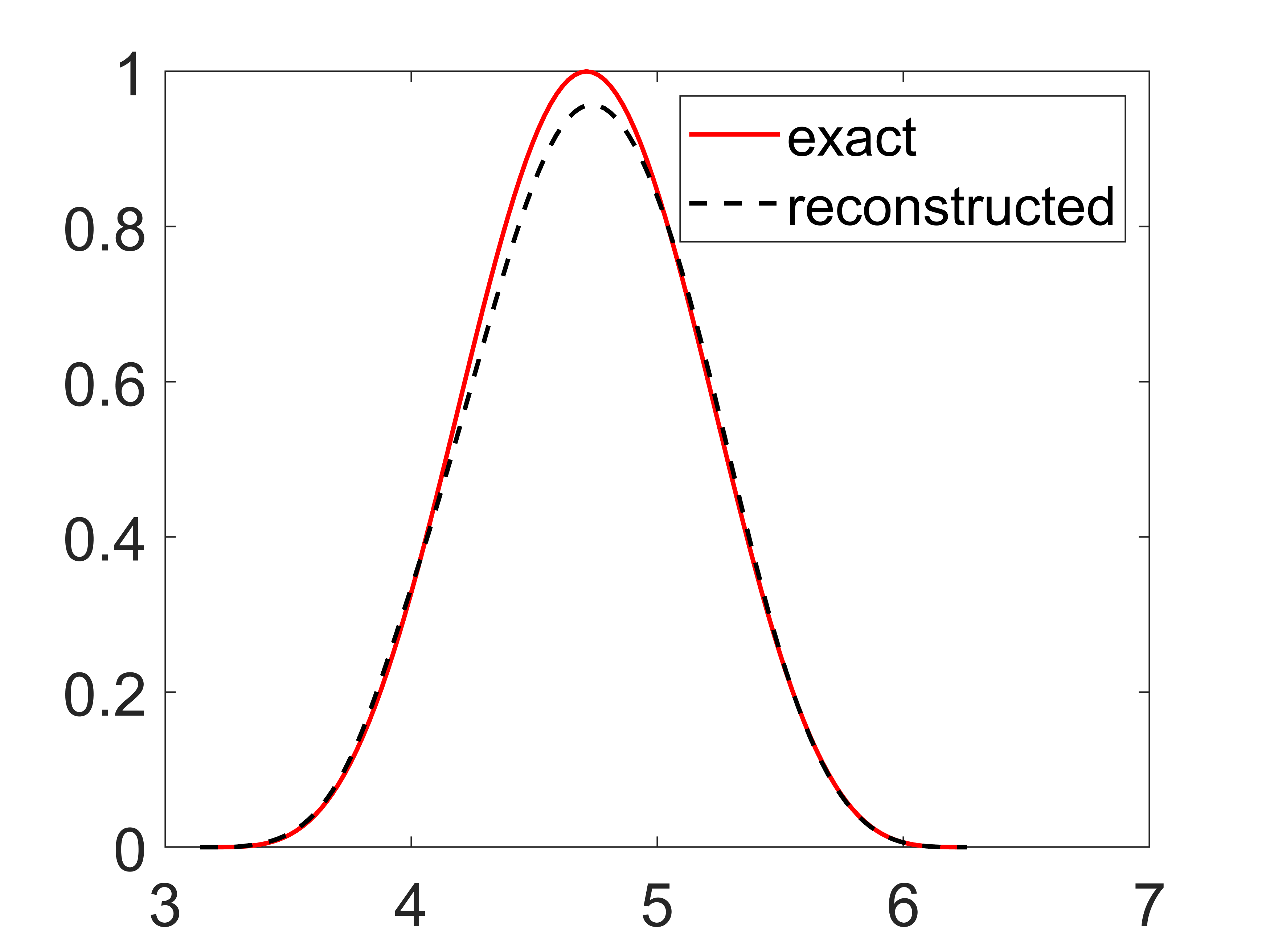}}%
		\caption{Reconstruction of peanut-shaped domain (the first row) and the impedance function (the second row) under different noise levels $\delta\in\{0,1\%,5\%\}$ in Example \ref{EX2}.}\label{figE2:1}
	\end{center}
\end{figure}

\begin{figure}[htp]
	\begin{center}
		\subfigure[Noise free]{\includegraphics[angle=0,
			width=0.33\linewidth]{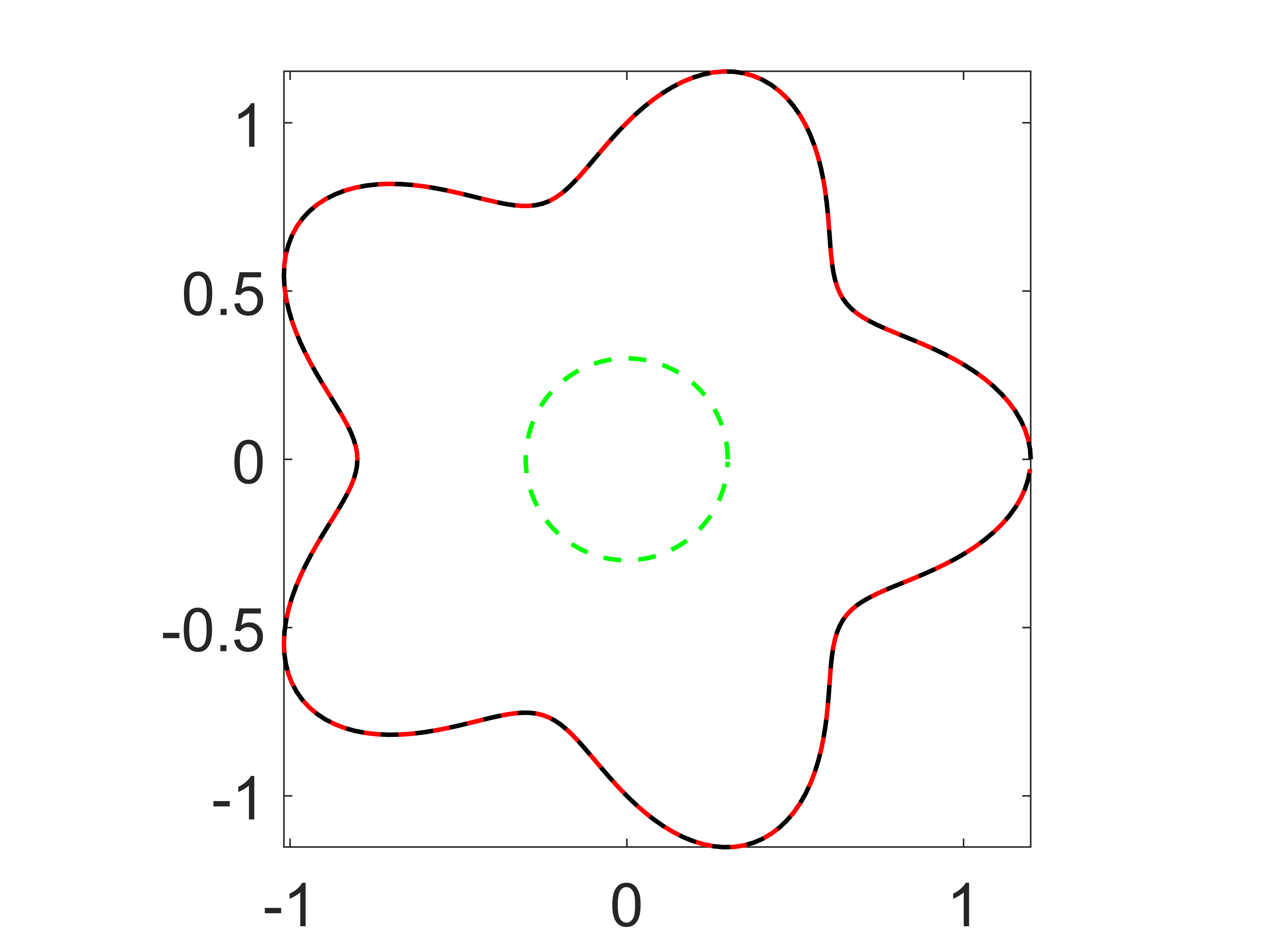}}%
		\subfigure[Noise 1$\%$]{\includegraphics[angle=0,
			width=0.33\linewidth]{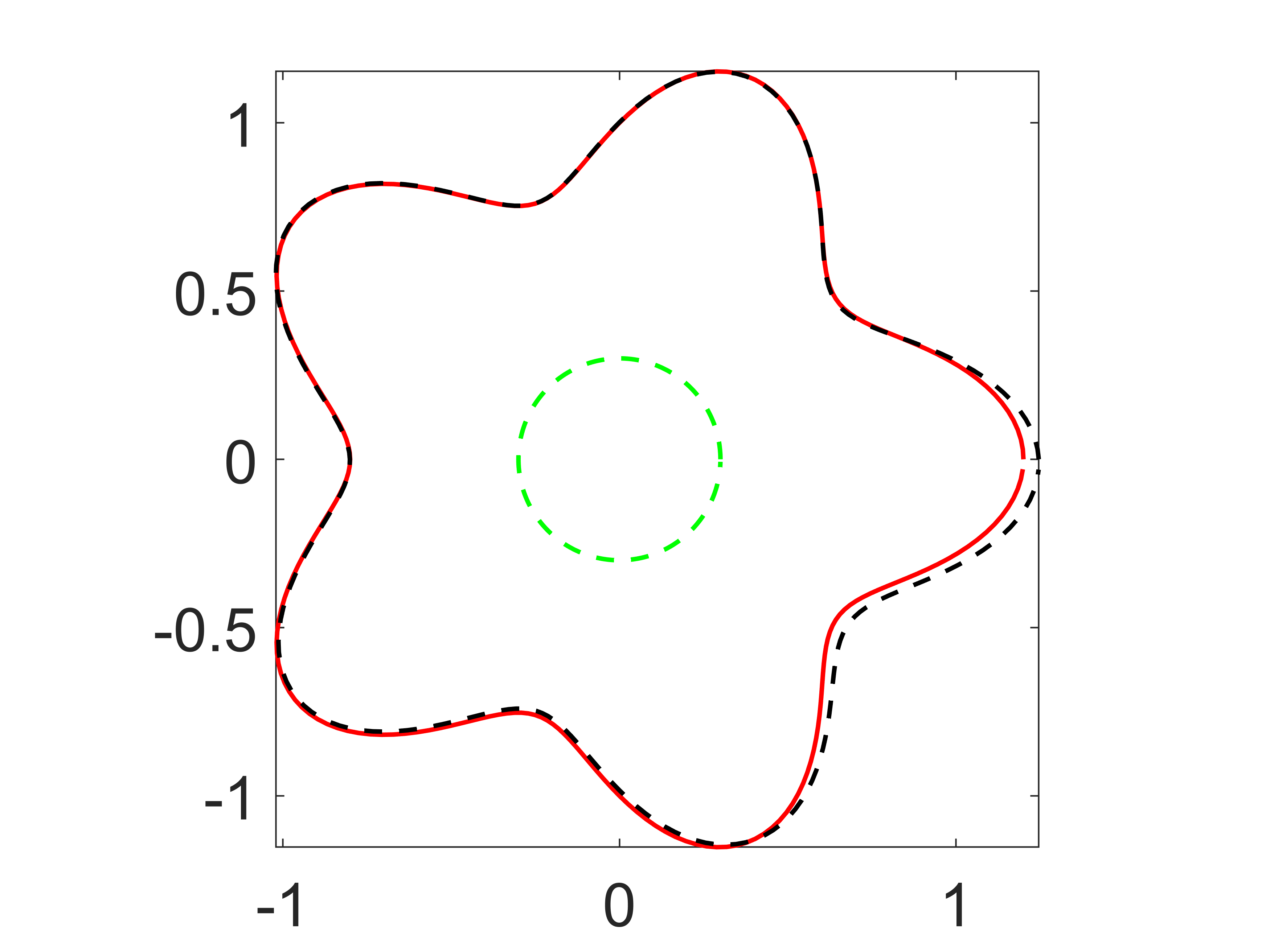}}%
		\subfigure[Noise 5$\%$]{\includegraphics[angle=0,
			width=0.33\linewidth]{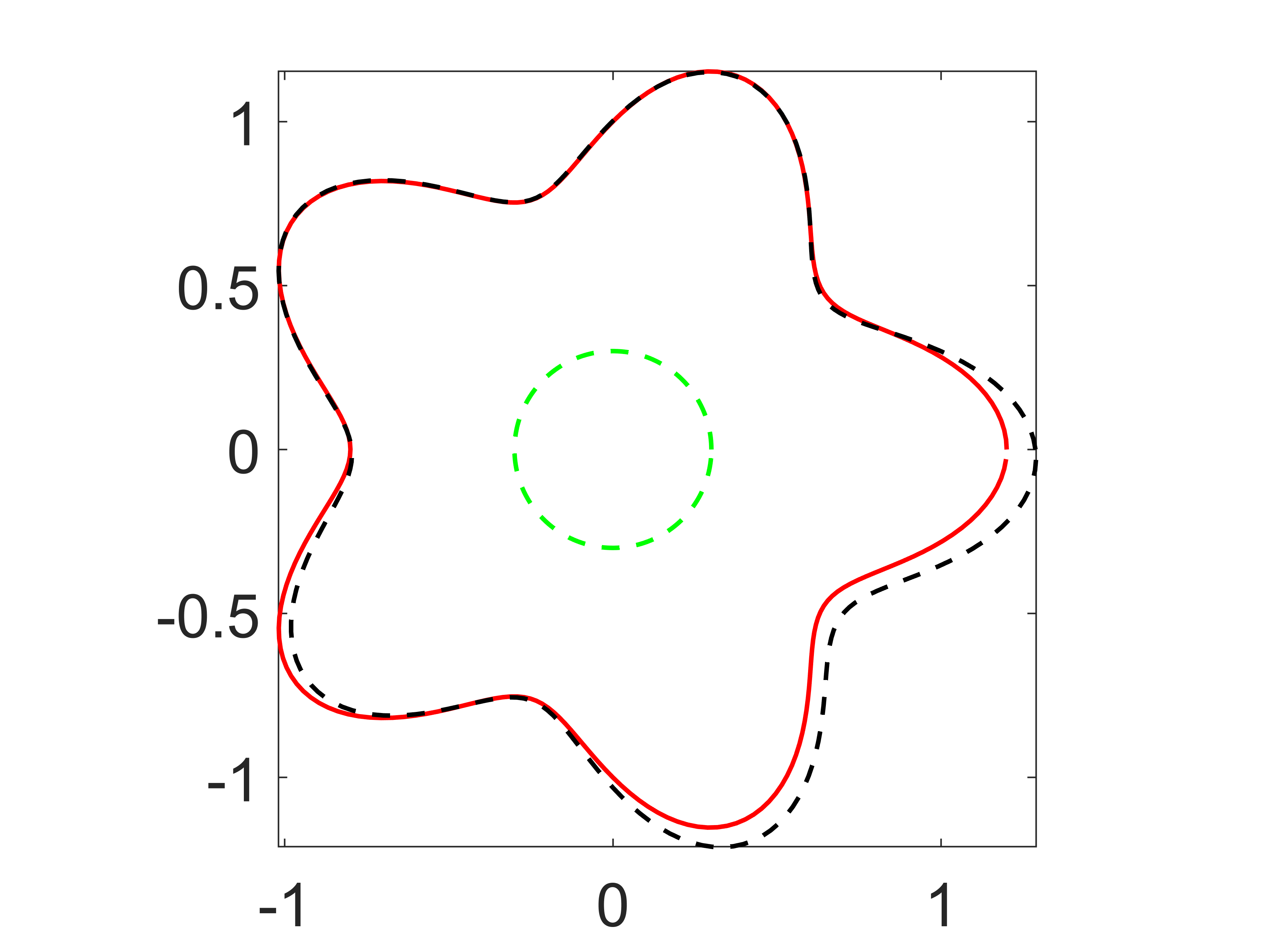}}\\%
		\subfigure[Noise free]{\includegraphics[angle=0,
			width=0.33\linewidth]{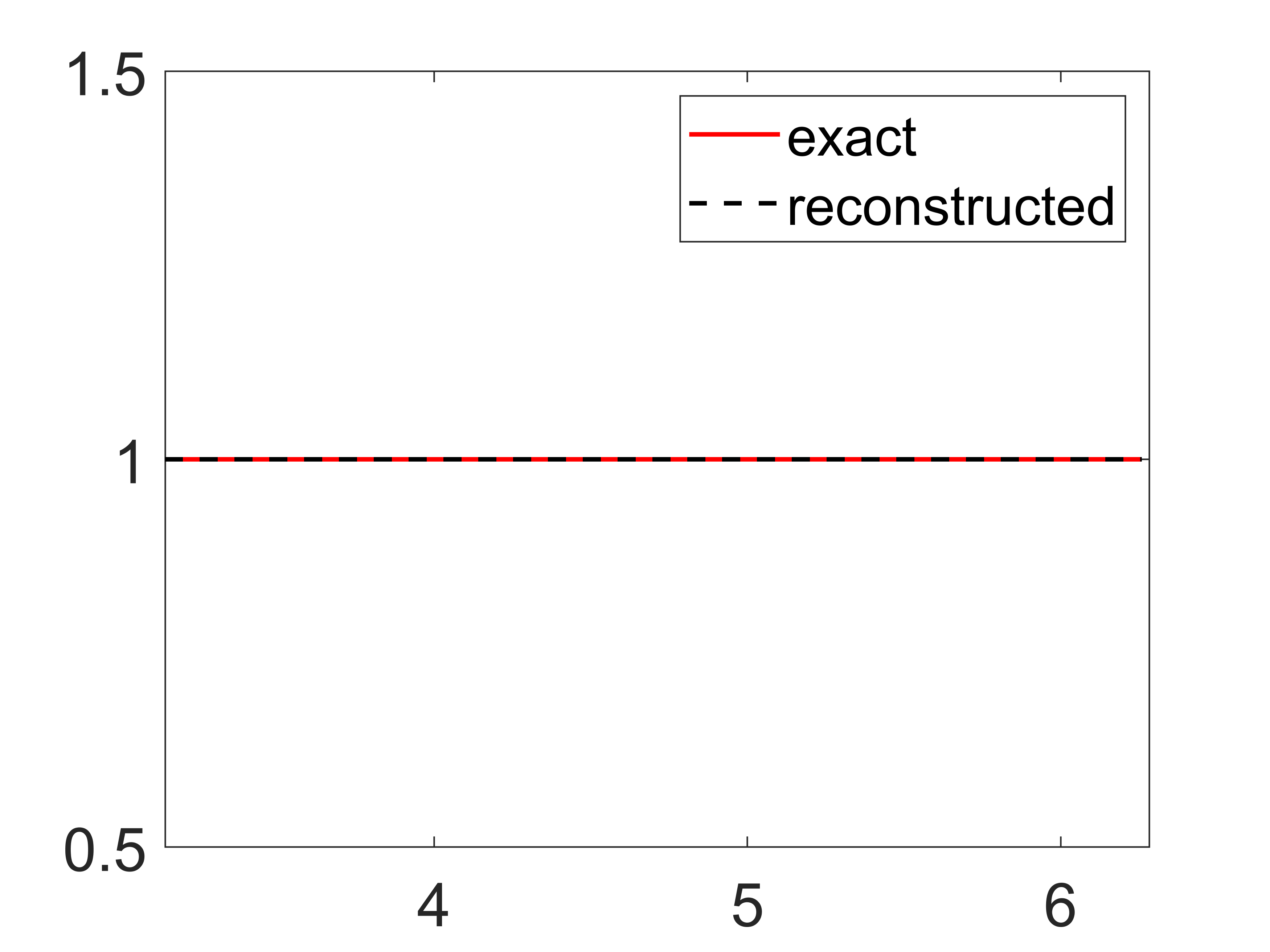}}%
		\subfigure[Noise 1$\%$]{\includegraphics[angle=0,
			width=0.33\linewidth]{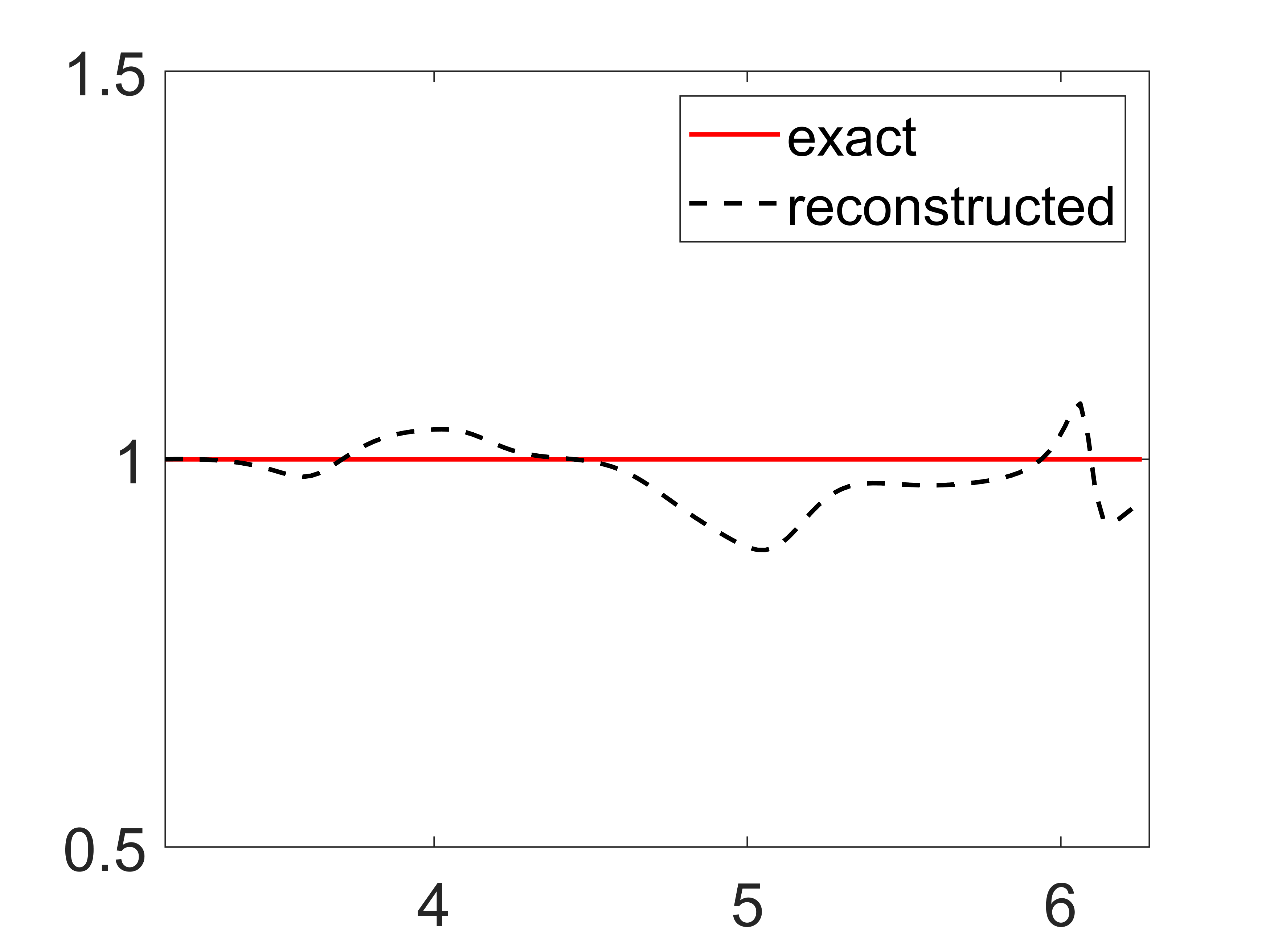}}%
		\subfigure[Noise 5$\%$]{\includegraphics[angle=0,
			width=0.33\linewidth]{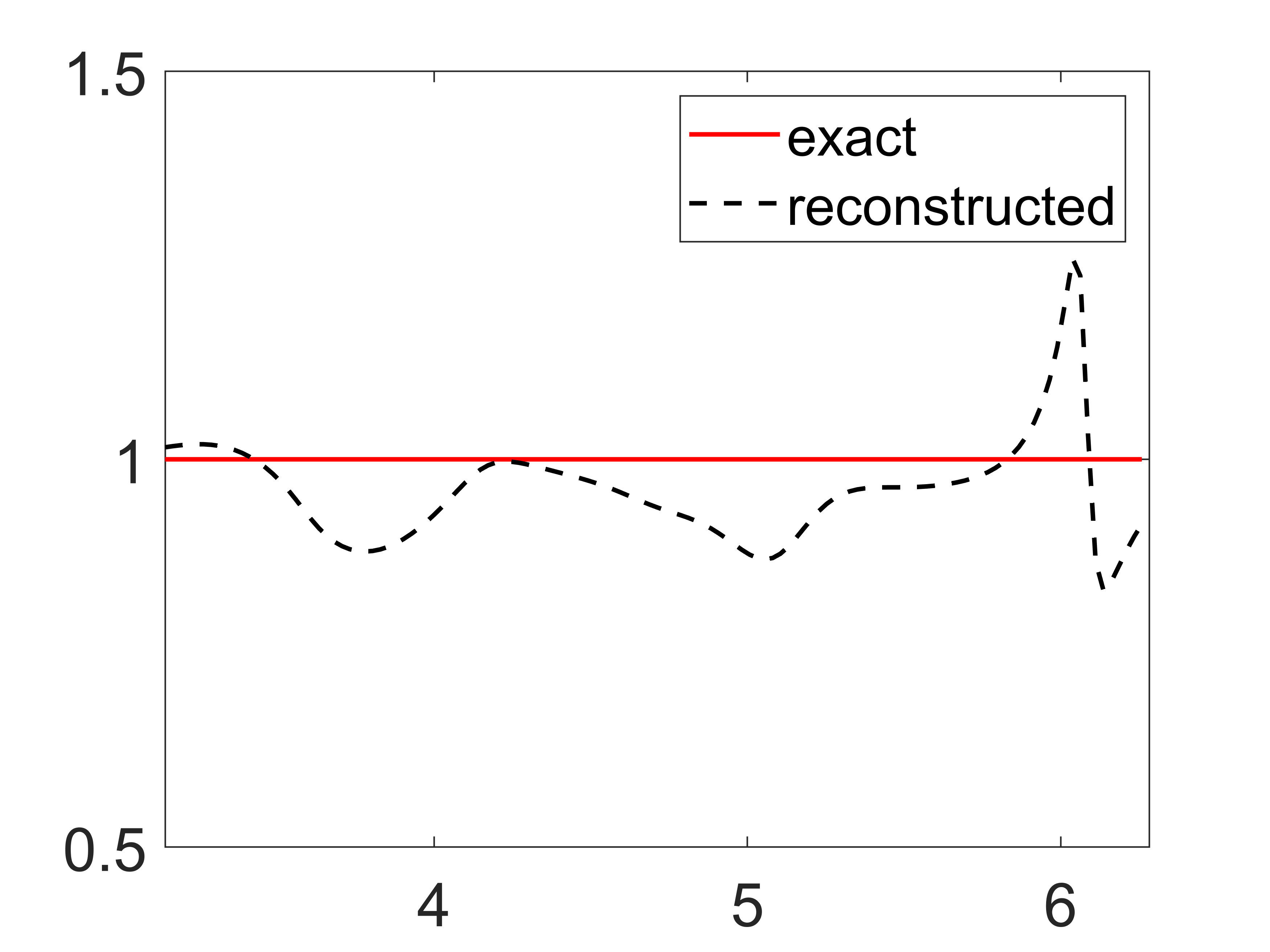}}%
		\caption{Reconstruction of the starfish-shaped domain and the impedance function under different noise levels $\delta\in\{0,1\%,5\%\}$ in Example \ref{EX2}.}\label{figE2:2}
	\end{center}
\end{figure}

\begin{example}\label{EX0}
	In this example, we choose $D$ to be a circle centered at the origin with radius $1.2$.
	The impedance function $\chi$ is given by
	\begin{equation}
		\chi(\vartheta)=\sin^2\vartheta,~~\vartheta\in[0,2\pi),
	\end{equation}
	and the input boundary function $\bm{g}(\bm x)=(g_1,g_2)^\top$ is given by
	\begin{equation}
		g_1(\bm x(\vartheta))=g_2(\bm x(\vartheta))=\sin^2\vartheta,~~\vartheta\in[0,2\pi).
	\end{equation}
\end{example}
Take the virtual boundary to be $\partial B=\{\bm{x}:~|\bm{x}|=7\}$. The initial guesses for the boundary and the impedance function are chosen to be a circle centered at the origin of radius $0.6$ and a constant function $\chi_0=0.5$, respectively.

We fix the frequency to be $\omega=2$. The iterative steps and the regularization parameter determined by Morozov discrepancy principle are displayed in Table \ref{table2}. Figure \ref{figE0:2} exhibits the reconstruction for the missing boundary as well as the impedance function subject to different noise levels $\delta\in\{0,1\%,5\%\}$. We can see that the reconstruction are satisfactory and the number of the iterative steps will not increase obviously compared with the reconstruction utilizing the noise-free data.
\begin{table}[htbp]
	\caption{The iterative steps and the regularization parameter for Example \ref{EX0}.}
	\begin{center}
		\begin{tabular}{ccccccc}
			\hline
			${\rm Noise~level}$                   &${\rm Iterative~steps}$      &${\rm Regularization~ parameter}$  \\
			\hline
			$0$      &$44$               &$9.8186$e$-17$      \\
			$1\%$       &$47$             &$1.0717$e$-05$  \\
			$5\%$      &$50$              & $8.4172$e$-05$   \\
			\hline
		\end{tabular}\label{table2}
	\end{center}
\end{table}

\begin{figure}[htp]
	\begin{center}
		\subfigure[]{\includegraphics[angle=0,
			width=0.33\linewidth]{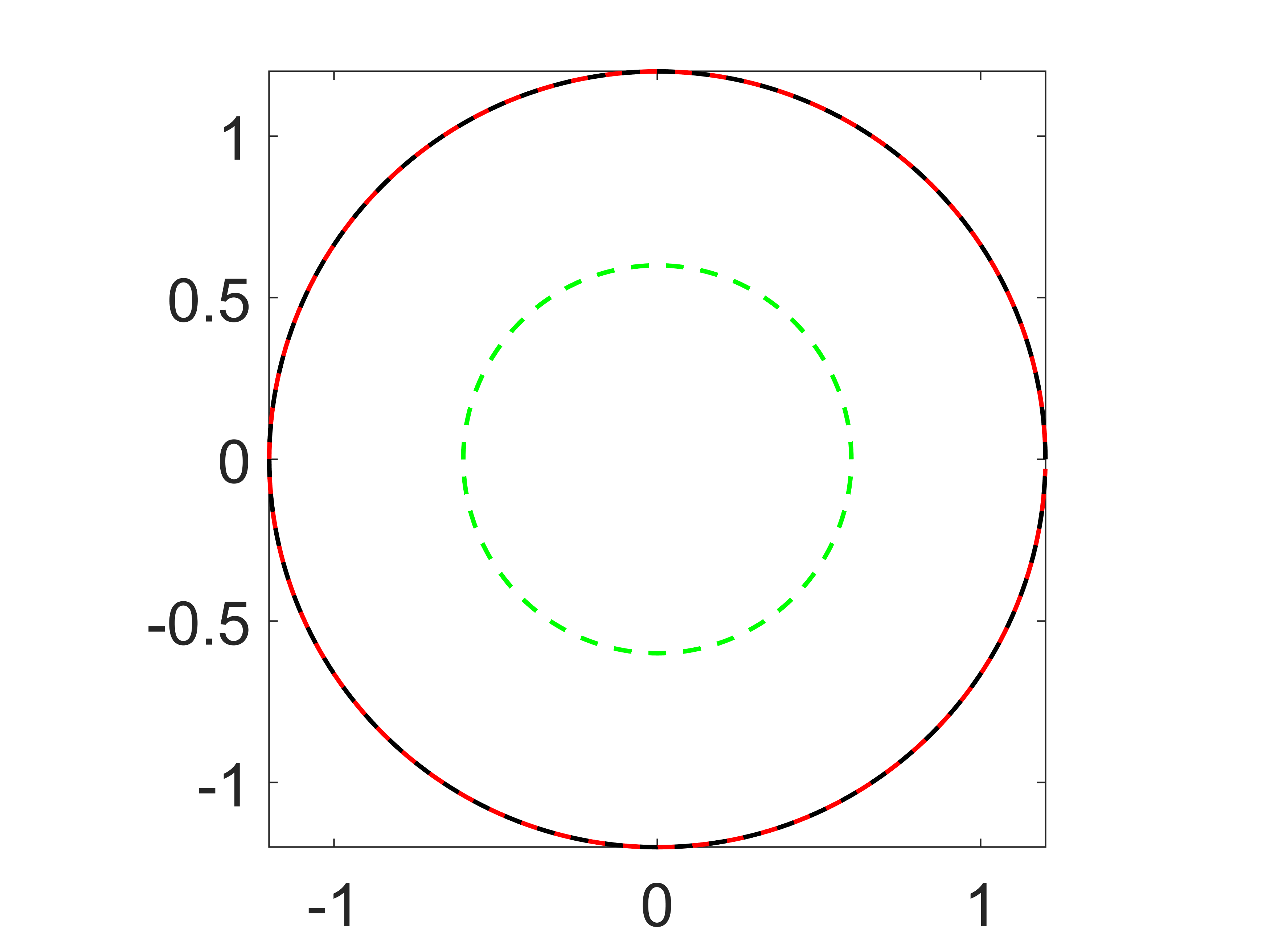}}%
		\subfigure[]{\includegraphics[angle=0,
			width=0.33\linewidth]{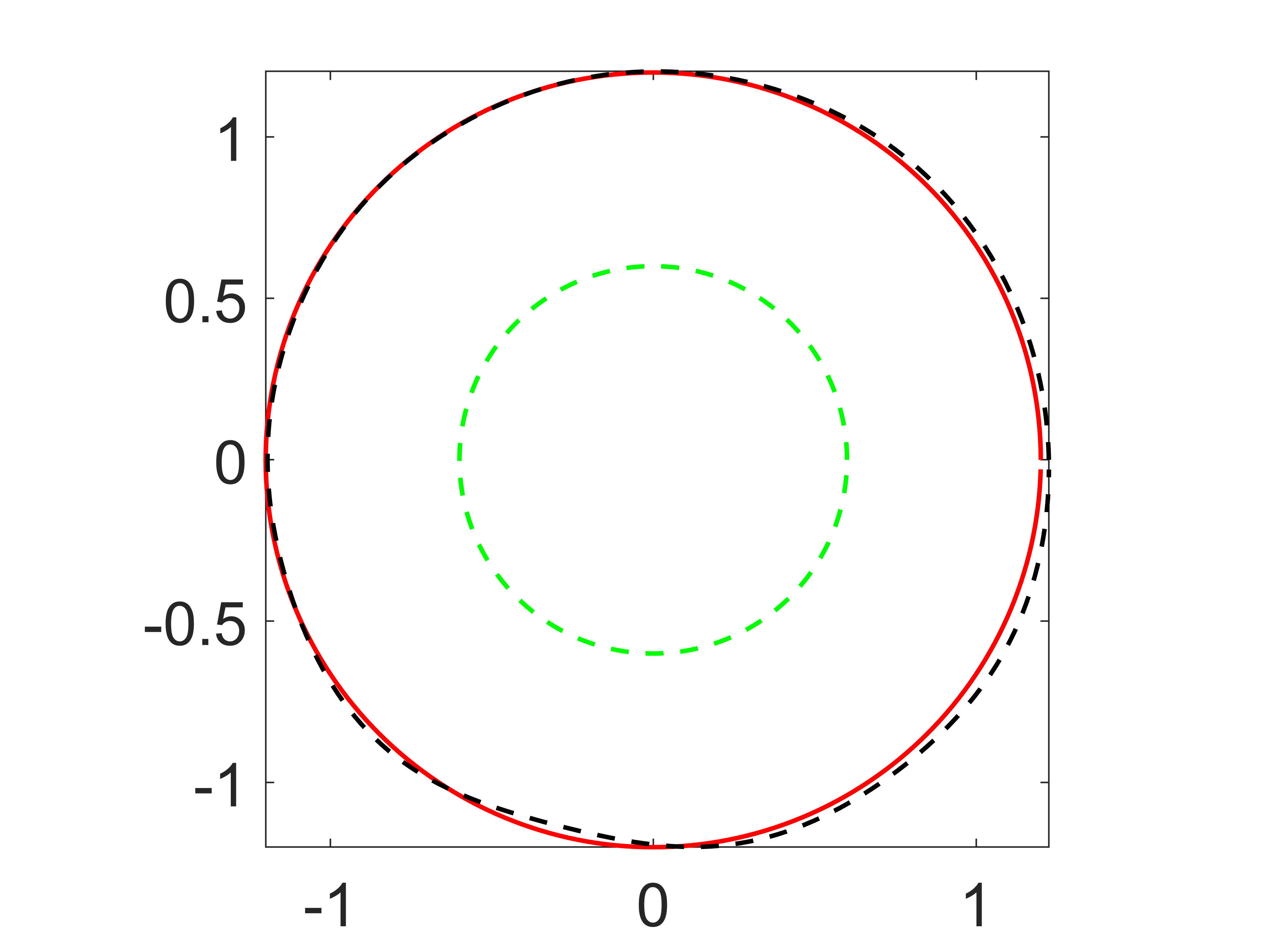}}%
		\subfigure[]{\includegraphics[angle=0,
			width=0.33\linewidth]{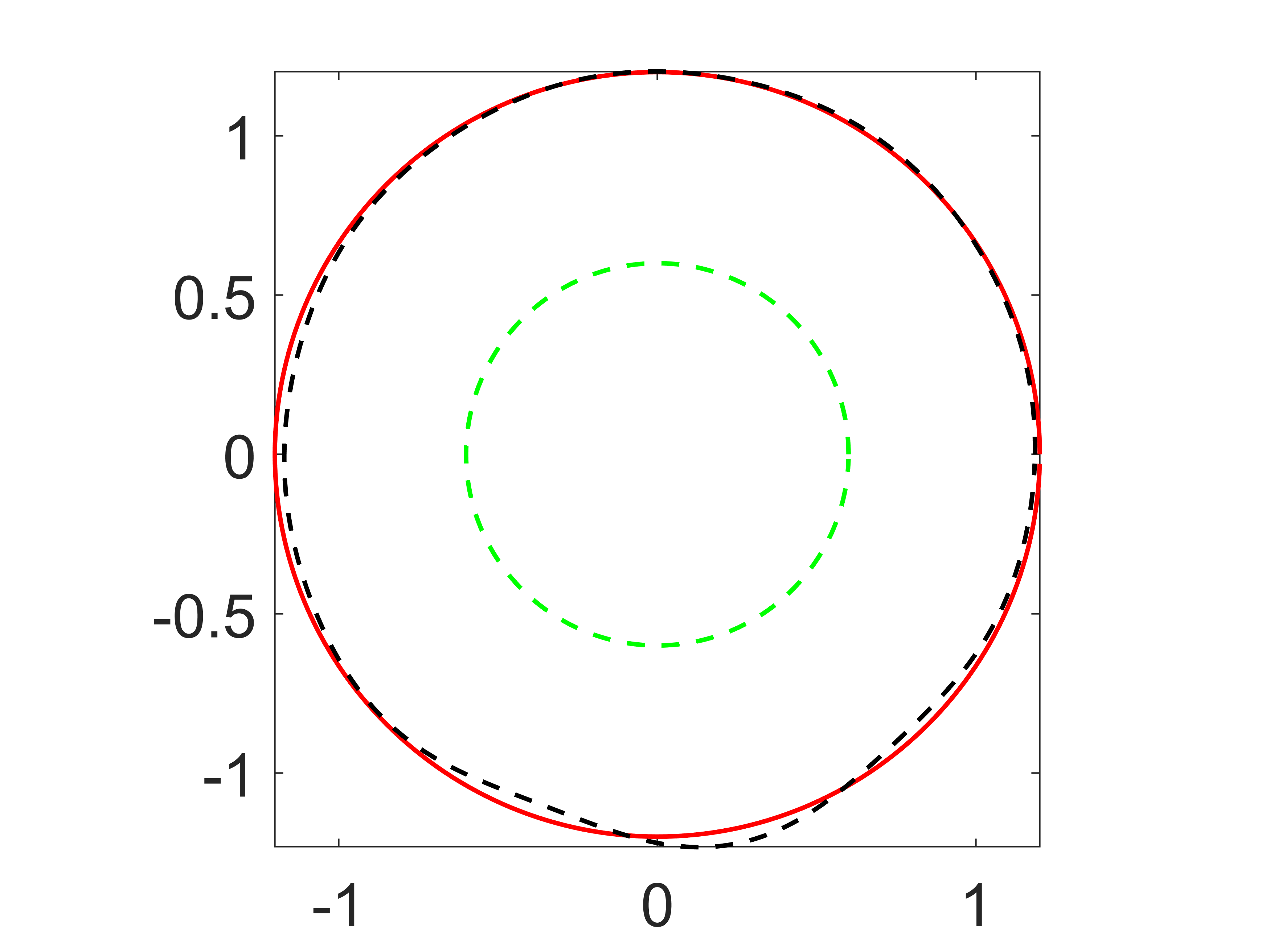}}\\%
		\subfigure[]{\includegraphics[angle=0,
			width=0.33\linewidth]{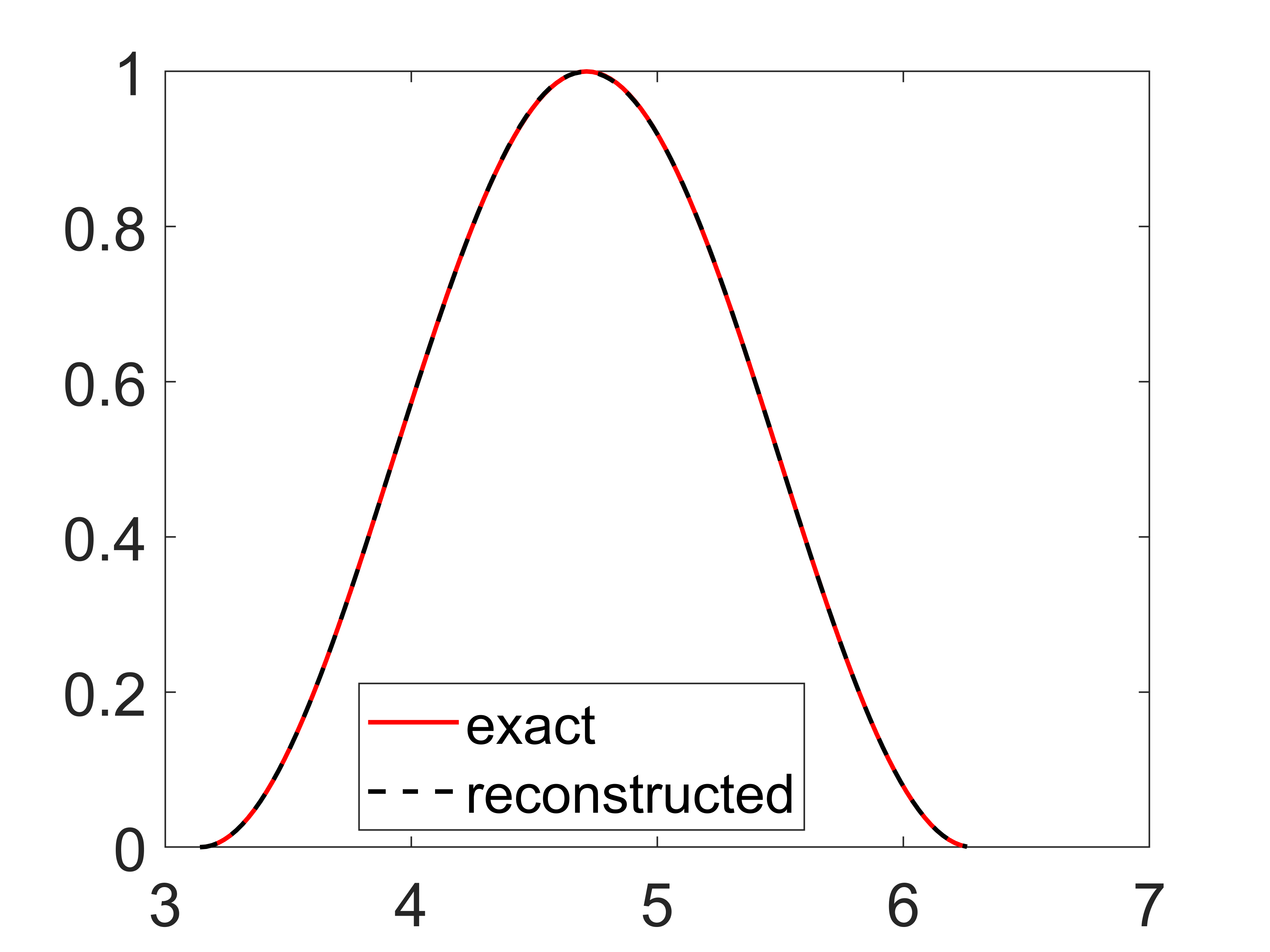}}%
		\subfigure[]{\includegraphics[angle=0,
			width=0.33\linewidth]{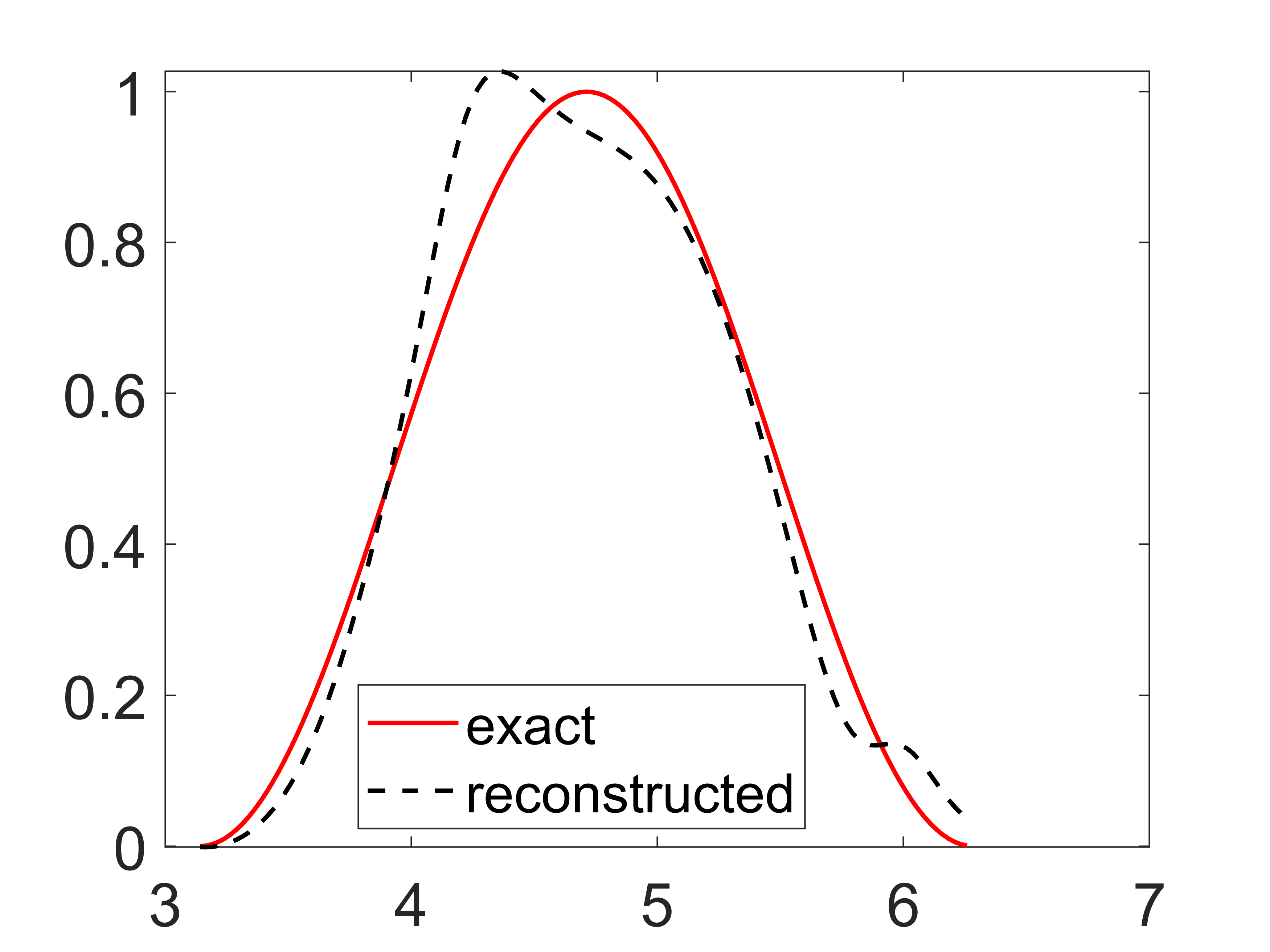}}%
		\subfigure[]{\includegraphics[angle=0,
			width=0.33\linewidth]{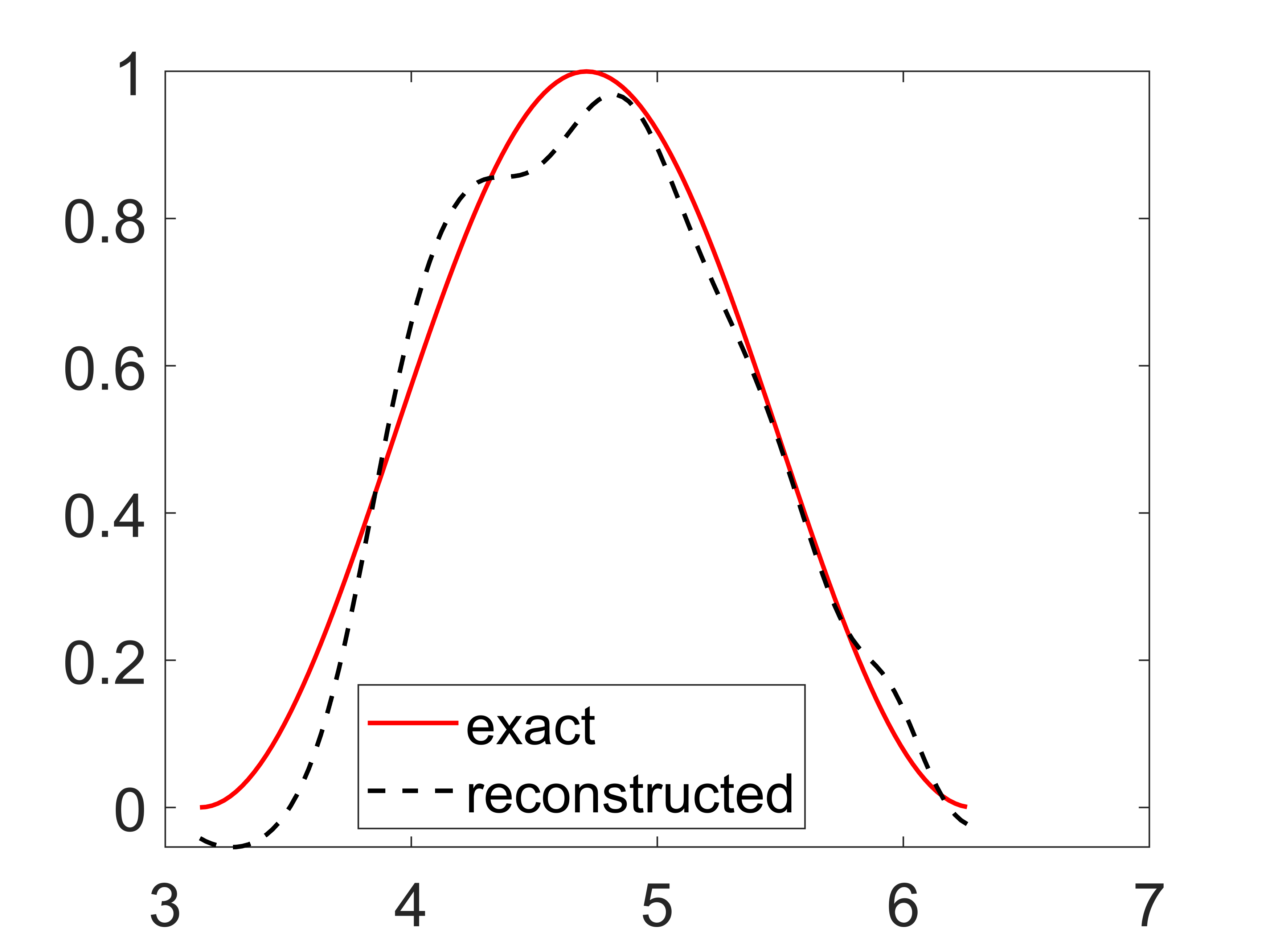}}\\%
		\caption{Reconstruction of the shape and the impedance function under different noise levels in Example \ref{EX0} (From left to right:  $\delta=0,1\%,5\%,$ respectively).}\label{figE0:2}
	\end{center}
\end{figure}


\section{Concluding remarks}
In this paper, we have studied the {inverse elastic problem} by a Newton-type iterative algorithm. We divide the  problem under consideration into two parts. The first part is to solve a Cauchy problem through using an indirect integral equation method combining a regularization technique. The second part is to simultaneously recover the elastic impedance and the shape by a Newton-type iterative method. Several numerical experiments are conducted to verify the effectiveness of the proposed method. In the further work, we shall focus on the extension to three dimensions.

\section*{Acknowledgements}
Yao Sun supported by the Natural Science Foundation of China (No: 11501566) and Tianjin Education Commission Research Project (No: 2022KJ072). Yan Chang and Yukun Guo are supported by NSFC grating 11971133.


\begin{thebibliography}{99}
	
	\bibitem{6}
	G. Alessandrini, A. Morassi and E. Rosset,
	Detecting cavities by electrostatic boundary measurements.
	{\em Inverse Problems}, 18(5):1333, 2002.
	
	
	
	\bibitem{1}
	H. Ammari, E. Bretin, J. Garnier, H. Kang, H. Lee and
	A. Wahab,
	In {\em {Mathematical Methods in Elasticity Imaging}}. Princeton
	University Press, 04 2015.
	
	\bibitem{4}
	H. Ammari, E. Bretin, J. Garnier, H. Kang, H. Lee and
	A. Wahab,
	In {\em {Theory of Elasticity}}. 1986.
	
	
	
	
	\bibitem{Colton2013} D. Colton and R. Kress, \emph{Inverse Acoustic and Electromagnetic Scattering Theory}, 4th edn (New York: Springer), 2019.
	
	\bibitem{17}
	F. Cakoni and R. Kress,
	{Integral equations for inverse problems in corrosion detection from
		partial cauchy data.}
	{\em Inverse Problems and Imaging}, 1:229--245, 2007.
	
	
	
	\bibitem{18}
	F. Cakoni, R. Kress and C. Schuft,
	{Simultaneous reconstruction of shape and impedance in corrosion
		detection.}
	{\em Methods and Applications of Analysis}, 17: 357--378, 2010.
	
	
	\bibitem{era} Y. Chang, Y. Guo, Simultaneous recovery of an obstacle and its excitation sources from near-field scattering data, \emph{Electronic Research Archive}, 30: 1296--1321, 2022.
	
	
	\bibitem{Chang23}Y. Chang, Y. Guo, H. Liu, D. Zhang, Recovering source location, polarization, and shape of obstacle from elastic scattering data,
	\emph{Journal of Computational Physics},
	489:112289, 2023.
	
	\bibitem{RTM} Z. Chen, G. Huang, Reverse time migration for extended obstacles: Elastic waves (in Chinese). \emph{Sci. Sin. Math.}, 45: 1103--1114, 2015.
	
	
%
%
%
%
	
	
	
	\bibitem{8}
	D. Fasino and G. Inglese,
	{Discrete methods in the study of an inverse problem for Laplace's
		equation}.
	{\em IMA Journal of Numerical Analysis}, 19(1):105--118, 1999.
	
	
	
%
%
	
	
	\bibitem{12}
	G. Hu, A. Kirsch and M. Sini,
	Some inverse problems arising from elastic scattering by rigid
	obstacles.
	{\em Inverse Problems}, 29(1):015009, 2013.
	
	\bibitem{13}
	G. Hu, A. Mantile, M. Sini and T. Yin,
	Direct and inverse time-harmonic elastic scattering from point-like
	and extended obstacles.
	{\em Inverse Problems and Imaging}, 14(6):1025--1056, 2020.
	
	\bibitem{Isakov} V. Isakov, On uniqueness of obstacles and boundary conditions from restricted dynamical and scattering data. \emph{Inverse Problems and Imaging}, 2(1):151--165, 2008.
	
%
	\bibitem{lxd} X. Ji, X. Liu,
	Direct sampling methods for inverse elastic scattering problems. {\em Inverse Problems}, 34: 035008, 2018.
	
%
%
	
	
	\bibitem{7}
	R. Kress,
	Uniqueness and numerical methods in inverse obstacle scattering.
	{\em Journal of Physics: Conference Series}, 73(1):012003, 2007.
	
	\bibitem{Kupradze79}V.D. Kupradze, \emph{Three-dimensional problems of the mathematical theory of elasticity and
		thermoelasticity}. Amsterdam: North-Holland; 1979.
	
	
	
	\bibitem{KressIP01}R. Kress, W. Rundell, {Inverse scattering for shape and impedance}. \emph{Inverse Problems}, 17(4): 1075--1085, 2001.
	
	\bibitem{Kress2005IP}R. Kress and W. Rundell, Nonlinear integral equations and the iterative solution for an inverse boundary value problem. \emph{Inverse Problems}, 21(4):1207, 2005.
	
	\bibitem{KressJIE08}R. Kress, W. Rundell, {Inverse scattering for shape and impedance revisited}. \emph{Journal of Integral Equations and Applications}, 30(2):293--311, 2018.
	
	
	
	
	
	
	\bibitem{Liu2019}J. Liu, X. Liu, J. Sun, Extended sampling method for inverse elastic scattering problems using one incident wave. \emph{SIAM Journal on Imaging Sciences}, 12(2):874--892, 2019.
	
	
	F. L. Lou\"{e}r,
	A domain derivative-based method for solving elastodynamic inverse
	obstacle scattering problems.
	{\em Inverse Problems}, 31(11):115006, 2015.
	
	
	\bibitem{McLean} W. McLean, \emph{Strongly Elliptic Systems and Boundary Integral Equations}, Cambridge: Cambridge University
	Press, 2000.
	
	\bibitem{5}A. Morassi and E. Rosset,
	Stable determination of cavities in elastic bodies.
	{\em Inverse Problems}, 20(2):453, 2004.
	
	\bibitem{Sun14}{Y. Sun, F. Ma, D. Zhang, An integral equations method combined minimum norm
		solution for 3D elastostatics Cauchy problem. \emph{Computer Methods in Applied Mechanics and Engineering}, 271: 231--252, 2014.}
	
	
	\bibitem{Sun172}Y. Sun, L. Marin, An invariant method of fundamental solutions for two-dimensional isotropic linear elasticity. \emph{International Journal of Solids and Structures}, 117: 191--207, 2017.
	
	\bibitem{Sun17}{ Y. Sun, F. Ma, X. Zhou, An Invariant Method of Fundamental Solutions for the Cauchy Problem in Two-Dimensional Isotropic Linear Elasticity.
		\emph{Journal of Scientific Computing}, 64: 197--215, 2015.}
	
	
	\bibitem{Sun24} Y. Sun, Y. Wang, A highly accurate indirect boundary integral equation solution for three
	dimensional elastic scattering problem. \emph{Eng. Anal. Bound. Elem.} 159: 402--417, 2024.
%
%
%
%
%
\end{thebibliography}
\end{document}